\newcommand{\comment}[1]{}
\newtheorem{lem}{Lemma}%[section]
\newtheorem{propn}[lem]{Proposition}
\newtheorem{cor}[lem]{Corollary}
\newtheorem{thm}[lem]{Theorem}
\newtheorem{claim}{Claim}
\newtheorem*{thmA}{Theorem A}
\newtheorem*{thmB}{Theorem B}
\theoremstyle{remark}
\theoremstyle{definition}
\newcommand{\Z}{\mathbb Z}
\newcommand{\T}{\mathbb T}
\newcommand{\C}{\mathbb C}
\DeclareMathOperator{\degree}{deg}
\newcommand{\D}{\delta}
\newcommand{\E}{\epsilon}
\newcommand{\VE}{\varepsilon}
\newcommand{\A}{\alpha}
\newcommand{\B}{\beta}
\newcommand{\be}{\begin{equation}}
\newcommand{\ee}{\end{equation}}
\newcommand{\bee}{\begin{equation*}}
\newcommand{\eee}{\end{equation*}}
\begin{document}
\title{Polynomial configurations in difference sets\\ (Revised Version)}
\author{Neil Lyall\quad\quad\quad\'Akos Magyar}
\thanks{Both authors were partially supported by NSF grants.}

\address{Department of Mathematics, The University of Georgia, Boyd
  Graduate Studies Research Center, Athens, GA 30602, USA}
\email{lyall@math.uga.edu}
\address{Department of Mathematics, The University of Georgia, Boyd
  Graduate Studies Research Center, Athens, GA 30602, USA}
\email{magyar@math.uga.edu}

\keywords{Difference sets, S\'ark\"ozy's theorem, polynomial configurations}
%\subjclass[2000]{44A12, 42B20}

\begin{abstract} 
%Suppose $A$ is a subset of the integers of positive upper density. 
We prove a quantitative version of the Polynomial Szemer\'edi Theorem for \emph{difference sets}.
%result on the existence of %linearly independent 
%polynomial configurations in the difference set of sparse subsets of the integers. 
This result is achieved by first establishing a higher dimensional analogue of a theorem of S\'ark\"ozy (the simplest non-trivial case of the Polynomial Szemer\'edi Theorem asserting that the difference set of any subset of the integers of positive upper density necessarily contains a perfect square) and then applying a simple lifting argument. 

\end{abstract}
%\date{\today}
\maketitle
%\tableofcontents
{\small
\emph{An earlier version of this article, in which the main results were restricted to the case of linearly independent polynomials, has already appeared in \cite{NL}. The main substantive changes in this revision occur in the statements of Theorem \ref{1} and Corollary \ref{cor} which are obtained through a simple modification of the original argument in Section \ref{2.1}.}}

\setlength{\parskip}{3pt}

\section{Introduction}

\subsection{Background}
A striking and elegant result in density Ramsey theory states that in any subset of the integers of positive upper density there necessarily exist two distinct elements whose difference is a perfect square. 

This result was originally conjectured by L. Lov\'asz and eventually verified independently by Furstenberg \cite{Furst}, using techniques from ergodic theory, and S\'ark\"ozy \cite{Sarkozy}, using an approach similar in spirit to Roth's Fourier analytic (circle method inspired) proof of Szemer\'edi's theorem for arithmetic progressions of length three.

S\'ark\"ozy actually obtained the following stronger quantitative result%\footnote{ \ S\'ark\"ozy in fact obtained results of this type for arbitrary polynomials with integer coefficients and zero constant term.}
.

\begin{thmA}[S\'ark\"ozy \cite{Sarkozy}]
If $A\subseteq[1,N]$ and $d^2\notin A-A$ for any $d\ne0$, then there exists an absolute constant $C>0$ such that
\[\frac{|A|}{N}\leq C\left(\frac{(\log\log N)^2}{\log N}\right)^{1/3}.\]
\end{thmA}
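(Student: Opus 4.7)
The proof should follow the Fourier-analytic (circle-method) density-increment strategy of Roth and S\'ark\"ozy. Set $\alpha = |A|/N$, fix a small parameter $\eta > 0$, and let $M = \lfloor \sqrt{\eta N}\rfloor$. Consider the count
\[
S \;=\; \sum_{d=1}^M \#\{x\in\Z : x,\,x+d^2 \in A\}.
\]
The hypothesis that $A-A$ contains no nonzero square forces $S=0$, whereas a random set of density $\alpha$ would produce $S\approx \alpha^2 NM$. This deficit of order $\alpha^2 NM$ must be explained by non-trivial Fourier mass of $1_A$.

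After embedding $A$ in a cyclic group $\Z_{N'}$ with $N'\approx 3N$ to avoid wrap-around, I would apply Plancherel to write
\[
S \;=\; \frac{1}{N'}\sum_{\xi}|\widehat{1_A}(\xi)|^2\,\overline{\widehat{R}(\xi)}, \qquad \widehat{R}(\xi)\;=\;\sum_{d=1}^{M}e(d^2\xi/N'),
\]
and split the frequencies into \emph{major arcs} (small neighbourhoods of rationals $p/q$ with $q\le Q=Q(\alpha)$) and \emph{minor arcs}. On minor arcs, Weyl's inequality for the quadratic exponential sum gives $|\widehat{R}(\xi)|=o(M)$, making the minor-arc contribution negligible once combined with Plancherel control of $\sum |\widehat{1_A}|^2$. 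On major arcs, the Gauss-sum bound $\bigl|\sum_{d=1}^q e(pd^2/q)\bigr|\ll q^{1/2}$ governs the local Fourier behaviour. Since the $\xi=0$ term alone reproduces the expected main contribution $\alpha^2 NM$, the vanishing of $S$ forces the existence of a non-zero frequency $\xi$ near some $p/q$ with $q\le Q$ at which $|\widehat{1_A}(\xi)|$ is large compared with $\alpha$.

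This $L^2$ Fourier concentration is then converted into an $L^\infty$ density increment on an arithmetic progression by a Dirichlet/pigeonhole argument. The key wrinkle, compared with Roth's theorem for three-term progressions, is that the sub-progression must have common difference $q^2$ rather than $q$: a difference inside such a progression is a perfect square in $\Z$ if and only if the corresponding difference in the rescaled integer interval is a perfect square, so the problem remains self-similar under iteration. One thus obtains a new set $A'\subseteq [1,N_1]$ with no square in $A'-A'$, $N_1\gtrsim N/q^2$, and $|A'|/N_1\ge \alpha + c\alpha^2$.

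Iterating, the density at least doubles after $O(1/\alpha)$ steps while $N$ shrinks by a factor of at most $Q^2$ per step. Choosing $Q=\alpha^{-O(1)}$ (large enough to make the minor arcs negligible, small enough to keep $N/q^2$ sizeable), balancing the major-arc radius against the length of the sub-progression produced by the pigeonhole step, and tracking the total loss in $N$ against the $O(1/\alpha)$ iterations required to push the density past $1$, one obtains an inequality that inverts to $\alpha\le C((\log\log N)^2/\log N)^{1/3}$. The main obstacle is quantitative: one needs sufficiently sharp Weyl and Gauss sum estimates for $\sum_{d\le M}e(d^2\xi)$, together with a careful book-keeping of all the polylogarithmic losses through the $O(1/\alpha)$-fold iteration, in order to extract precisely the exponent $1/3$ rather than a weaker power such as $1/2$ or $1$.
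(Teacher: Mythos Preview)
The paper does not prove Theorem~A: it is quoted as a background result of S\'ark\"ozy, and the paper's own contributions are Theorems~\ref{1} and~\ref{2}. There is therefore nothing in the paper to compare your proposal against. Incidentally, the paper's method specialized to the single polynomial $P_1(d)=d^2$ (so $\ell=1$, $k=2$ in Theorem~\ref{1}) yields only the weaker bound $|A|/N\ll (\log\log N)/\log N$, not S\'ark\"ozy's exponent~$1/3$.

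Your sketch has the right overall shape, and the self-similarity observation (passing to subprogressions of common difference $q^2$) is the essential structural point. But the numerics you record do not reach the stated bound: an increment $\alpha\mapsto\alpha+c\alpha^2$ on a subprogression of modulus $q\le Q=\alpha^{-O(1)}$, iterated the $O(1/\alpha)$ times needed to push the density past $1$, shrinks $N$ by a cumulative factor at most $\alpha^{-O(1/\alpha)}$, and inverting $\log N\ll\alpha^{-1}\log(1/\alpha)$ gives only $\alpha\ll(\log\log N)/\log N$. S\'ark\"ozy's exponent $1/3$ is not obtained by tightening this bookkeeping; it requires exploiting the Gauss-sum saving $|\widehat{R}(\xi)|\ll Mq^{-1/2}$ on the major arc near $a/q$ through a further dichotomy, in which large Fourier mass at a rational with \emph{small} $q$ yields a density increment substantially larger than $c\alpha^2$, and the trade-off between the size of the increment and the modulus $q$ (hence the shrinkage of $N$) is then optimized. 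You flag the exponent as the ``main obstacle'', which is correct, but the resolution is a genuine additional idea rather than more careful accounting of the scheme you have written down.
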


\emph{Notation:} In the theorem above and in the sequel we will use $N$ (and later also $M$) to denote an arbitrary positive integer, $[1,N]$ to denote $\{1,\dots,N\}$ as is customary, and $A\pm A$ to denote the usual difference and sum sets of $A$, namely $A\pm A=\{a\pm a'\,|\,a,a'\in A\}$. %Throughout this paper the letters $c$ and $C$ will denote absolute constants that will generally satisfy $0<c\ll 1\ll C$, whose values may change from line to line and even from step to step.

The current best known quantitative bound of $(\log N)^{-c\log\log\log\log N}$ in Theorem A is due to Pintz, Steiger and Szemer\'edi \cite{PSS}. These methods were later extended by Balog, Pelik\'an, Pintz and Szemer\'edi \cite{BPPS} to obtain the same bounds, with the implicit constant now depending on $k$, for sets with no $k$th power differences. 
%Unfortunately these impressive bounds are currently only known for monomial differences. 
%Although S\'ark\"ozy obtained results of the type in Theorem A for arbitrary polynomials with integer coefficients and zero constant term, 

%In the other direction 
We note that it is conjectured that for any $\E>0$ and $N\geq N_0(\E)$ sufficiently large there exists a set $A\subseteq[1,N]$ with 
$|A|\geq N^{1-\E}$ that contains no square differences, see for example \cite{Green}. Ruzsa \cite{Ruzsa} has demonstrated that this is at least true for $\E=0.267$.  
%Ruzsa has also constructed sets $A\subseteq[1,N]$ with $|A|\geq cN^{1-1/k}$ that contains no $k$th power differences.

%\subsection{Generalizations} 
Bergelson and Leibman (extending on the ideas of Furstenberg) established a far reaching  qualitative generalization of 
S\'ark\"ozy's theorem, the so-called Polynomial Szemer\'edi Theorem.

\begin{thmB}[Bergelson and Leibman \cite{BL}] 
If $A$ is a subset of the integers of positive upper density and $P_1(d),\dots,P_\ell(d)$ are polynomials in $\Z[d]$ with $P_i(0)=0$ for $i=1,\dots,\ell $, then there exists $a\in A$ and $d\ne0$ such that
%\vspace{-10pt}
\[a+\{P_1(d),\dots,P_\ell(d)\}\subseteq A.\]
\end{thmB}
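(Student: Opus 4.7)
The plan is to proceed via the Furstenberg correspondence principle, which reduces the density statement about subsets of $\Z$ to a multiple recurrence statement on a probability measure-preserving system. Given $A\subseteq\Z$ of positive upper density, I would construct an invertible measure-preserving system $(X,\mathcal{F},\mu,T)$ and a set $E\in\mathcal{F}$ with $\mu(E)>0$ such that the existence of some $d\ne 0$ with
\[\mu\bigl(E\cap T^{-P_1(d)}E\cap\cdots\cap T^{-P_\ell(d)}E\bigr)>0\]
implies the desired polynomial configuration inside $A$.

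Having made this reduction, I would aim for the stronger positivity of the Ces\`aro average
\[\liminf_{N\to\infty}\,\frac{1}{N}\sum_{d=1}^N \mu\bigl(E\cap T^{-P_1(d)}E\cap\cdots\cap T^{-P_\ell(d)}E\bigr)>0.\]
The main engine is Bergelson's PET (Polynomial Exhaustion Technique) induction. One iterates an ergodic van der Corput estimate: after squaring an average of products $\prod_{i=1}^\ell T^{P_i(d)}f_i$, averaging in an auxiliary shift $h$, and opening the square, one obtains a new average involving the shifted polynomial family $\{P_i(d+h)-P_{j_0}(d)\}_{i\ne j_0}$ for some distinguished index $j_0$. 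Assigning each polynomial family a well-founded complexity (based on the multiset of degrees and the number of distinct leading coefficients), one verifies that every van der Corput step strictly decreases the complexity, so after finitely many iterations one reaches a trivial family (for instance, a single constant polynomial) whose average can be bounded directly.

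To upgrade convergence into the required recurrence I would combine PET with a soft structure argument: the multiple average converges in $L^2$, and its limit is controlled by a characteristic factor which turns out to be a nilfactor of bounded step. On such a nilfactor the polynomial orbit $(T^{P_1(d)}x,\ldots,T^{P_\ell(d)}x)$ equidistributes in a sub-nilmanifold passing through the diagonal, and integrating $\mathbf 1_E\otimes\cdots\otimes\mathbf 1_E$ against this equidistribution produces a lower bound that is a positive function of $\mu(E)$ alone.

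The hardest step is designing the complexity order for PET so that the van der Corput reduction genuinely simplifies the family at every stage, in the presence of the delicate cancellations that occur when two of the $P_i$ share a leading term; this is precisely what forces the auxiliary subtraction of a distinguished $P_{j_0}$ and the careful bookkeeping separating "leading" from "non-leading" polynomials. A closely related obstacle is identifying the correct characteristic factor: for a single polynomial of degree $k$ one needs a $(k-1)$-step nilfactor, but for a general family the step one must reach grows with the PET reduction, and extracting a clean positivity estimate from equidistribution on such nilmanifolds is what ultimately requires the structural machinery beyond PET itself.
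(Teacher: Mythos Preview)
The paper does not prove Theorem B at all. It is quoted as a known background result of Bergelson and Leibman \cite{BL}, and the paper's own contribution is the quantitative \emph{difference-set} statement Theorem \ref{1}, proved by Fourier-analytic (circle method) arguments via the higher-dimensional S\'ark\"ozy-type Theorem \ref{2}. There is therefore nothing in the paper to compare your proposal against.

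As for the proposal itself: your outline is a reasonable high-level sketch of the ergodic route, but it conflates two distinct proofs. The original Bergelson--Leibman argument does go through the Furstenberg correspondence principle and PET induction, but it does \emph{not} rely on characteristic factors or nilmanifold equidistribution; those tools (Host--Kra, Ziegler, Leibman's equidistribution theorems) were developed later and yield a different proof. Bergelson and Leibman instead combine PET with a polynomial extension of the Furstenberg--Katznelson colouring/IP machinery to obtain recurrence directly, without ever identifying the limiting factor. So while the first half of your plan (correspondence plus PET with a well-founded complexity order) is faithful to \cite{BL}, the second half (reduction to a nilfactor and equidistribution there) is a separate, more modern argument, and you should not present it as part of the same proof without flagging that substitution.
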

%$\left[\text{Szemer\'edi: } p_j(d)=jd \quad   \text{S\'ark\"ozy: } p_1=d^2, \, p_2=2d^2\right]$
We note that no quantitative version of this multiple recurrence result is known beyond the linear case of Szemer\'edi's theorem, see Gowers \cite{Gowers}, and the general single recurrence case of S\'ark\"ozy's theorem (the case $\ell=2$ above).

The purpose of this paper is to establish a quantitative result on the existence of polynomial configurations of the type in Theorem B in the \emph{difference set} of sparse subsets of the integers.

\subsection{Statement of Main Results}

We now fix a family of polynomials 
\[P_1(d),\dots,P_\ell(d)\in\Z[d]\] with $P_i(0)=0$ for $i=1,\dots,\ell $ and set $k=\max_i \degree P_i.$ We will assume throughout this paper that $k\geq2$.

\begin{thm}\label{1} 
%Let $N$ is a positive integer. 
If $A\subseteq[1,N]$ and 
$\{P_1(d),\dots ,P_\ell (d)\}\nsubseteq A-A$
for any $d\ne0$,
then we necessarily have
\[\frac{|A|}{N}\leq C\left(\frac{\log\log N}{\log N}\right)^{1/\ell(k-1)}\]
for some absolute constant $C=C(P_1,\dots,P_\ell )$.
\end{thm}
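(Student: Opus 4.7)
The abstract points to the strategy: reduce the problem to a higher-dimensional analogue of S\'ark\"ozy's theorem by means of a simple lifting argument. Accordingly, given $A\subseteq [1,N]$, I would pass to the product set $B=A^\ell\subseteq [1,N]^\ell$, which has density $|B|/N^\ell=(|A|/N)^\ell$. Writing $\vec P(d)=(P_1(d),\ldots,P_\ell(d))$, the relation $\vec P(d)\in B-B$ is exactly the existence of $a_i,a_i'\in A$ satisfying $a_i-a_i'=P_i(d)$ for every $i$ simultaneously, so the hypothesis of Theorem \ref{1} translates into $\vec P(d)\notin B-B$ for all $d\ne 0$. It therefore suffices to establish the following vector-valued S\'ark\"ozy-type bound: for every $B\subseteq [1,N]^\ell$ with $\vec P(d)\notin B-B$ for all $d\ne 0$,
\[
|B|/N^\ell \ll (\log\log N/\log N)^{1/(k-1)},
\]
since taking $\ell$-th roots then yields the exponent $1/\ell(k-1)$ claimed in Theorem \ref{1}.

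To prove this higher-dimensional S\'ark\"ozy bound, I would adapt S\'ark\"ozy's original Fourier-analytic density-increment strategy to the vector setting. Choosing $D\asymp N^{1/k}$, so that $\vec P(d)\in[-N,N]^\ell$ for $1\le d\le D$, define
\[
T(B):=\sum_{d=1}^{D}\sum_{\vec x\in\Z^\ell} 1_B(\vec x)\,1_B(\vec x+\vec P(d)),
\]
which vanishes under the hypothesis. Fourier inversion on $\T^\ell$ then expresses $T(B)$ as the integral of $|\widehat{1_B}(\vec\xi)|^2$ against the polynomial exponential sum
\[
S_D(\vec\xi):=\sum_{d=1}^{D} e(\vec\xi\cdot\vec P(d)),
\]
which is precisely the classical Weyl sum attached to the single-variable polynomial $Q_{\vec\xi}(d):=\sum_i\xi_i P_i(d)$ of degree $k$ in $d$. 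Partitioning $\T^\ell$ into major and minor arcs according to the rational approximability of the leading coefficient of $Q_{\vec\xi}$, Weyl's inequality yields power savings for $S_D$ on the minor arcs, while on the major arcs $S_D$ is well approximated by a smooth main term. Extracting the main term and comparing against the vanishing of $T(B)$ forces $\widehat{1_B}$ to carry substantial $L^2$-mass on the major arcs, from which a standard Plancherel/pigeonhole argument produces a density increment for $B$ on an affine sub-box of $[1,N]^\ell$ with common difference $q$ controlled quantitatively in terms of $|B|$.

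Iterating this density-increment step along a decreasing sequence of scales --- rescaling at each stage to replace the sub-box by a fresh cube of side length $N'\asymp N/q^k$, so that the one-parameter polynomial structure of $\vec P$ is preserved under restriction to $q\Z$-valued arguments --- terminates in $O(\log\log N)$ steps since the density cannot exceed $1$, and tracking the losses at each stage yields the stated $(\log\log N/\log N)^{1/(k-1)}$ bound. The main technical obstacle I anticipate is that $S_D(\vec\xi)$ depends on the family $\vec P$ only through the single linear combination $\vec\xi\cdot\vec P$, so for $\vec\xi$ lying on the hyperplane that annihilates the leading coefficients of $P_1,\ldots,P_\ell$ the effective degree of $Q_{\vec\xi}$ drops below $k$ and Weyl's inequality must be applied to the next nontrivial coefficient instead. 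Handling these degenerate frequencies carefully, and ensuring the rescaling step in the density-increment iteration respects the vector polynomial structure cleanly so that the recursion closes through to the final exponent, constitutes the technical heart of the argument.
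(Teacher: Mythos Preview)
Your overall strategy---pass to the product set $A^\ell$ and deduce Theorem~\ref{1} from a vector-valued S\'ark\"ozy bound---matches the paper's, but your choice of ambient space differs in a way that matters. You lift to $\Z^\ell$ and work directly with the curve $\vec P(d)=(P_1(d),\ldots,P_\ell(d))$; the paper instead lifts to $\Z^k$ (where $k=\max_i\deg P_i$) via the coefficient matrix and works with the \emph{monomial} curve $(d,d^2,\ldots,d^k)$. Concretely, writing $\mathcal P=\{c_{ij}\}$ for the $\ell\times k$ coefficient matrix, the paper sets $B=\{b\in[-N',N']^k:\mathcal P(b)\in A^\ell-m\}$ for a suitable shift $m$, shows $|B|\gg\delta^\ell N^k$, and observes that $(d,\ldots,d^k)\in B-B$ forces $\vec P(d)=\mathcal P(d,\ldots,d^k)\in A^\ell-A^\ell$. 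Theorem~\ref{1} then reduces to Theorem~\ref{2}, a S\'ark\"ozy theorem for the monomial curve only, with no further reference to the $P_i$.

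This buys precisely the two things you flag as the ``technical heart''. First, the Weyl sum becomes the generic $\sum_d e(\alpha_1 d+\cdots+\alpha_k d^k)$ with independent phase coefficients, so there are no degenerate-frequency hyperplanes to treat separately; relatedly, the major boxes in $\T^k$ are honest coordinate boxes rather than parallelepipeds aligned to the rows of $\mathcal P$, which is what your $\T^\ell$ analysis would actually produce. Second---and this is where your sketch has a real gap---the monomial curve is self-similar under the anisotropic dilation $b\mapsto(qb_1,q^2b_2,\ldots,q^kb_k)$, so restricting $d$ to $q\Z$ and rescaling coordinate $j$ by $q^j$ returns exactly the same problem at a smaller scale; this is why the iteration in the paper closes. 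In your setup, by contrast, $P_i(qd')/q=c_{i1}d'+c_{i2}qd'^2+\cdots+c_{ik}q^{k-1}d'^k$ is a \emph{different} polynomial family with growing coefficients, so your assertion that ``the recursion closes'' is not justified without further argument. (A minor correction: the iteration runs for $O(\delta^{-(k-1)})$ steps, not $O(\log\log N)$; the double logarithm enters only when converting the resulting inequality $\log M\ll\delta^{-(k-1)}\log\delta^{-1}$ into a bound on $\delta$.)
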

In the case of a single polynomial ($\ell=1$), this result has also recently been obtained by Lucier \cite{Lucier} and, to the best of our knowledge, constitutes the best bounds that are currently known for arbitrary polynomials with integer coefficients and zero constant term.

\comment{
We fix a parameter $0<\D\leq 1$, a family of linearly independent integer polynomials $P_1,\dots,P_\ell $ such that $P_i(0)=0$ for all $i=1,\dots,\ell $ and set $k=\max_i \degree P_i$. 

\begin{thm}\label{1} 
There exists $C=C(P_1,\dots,P_\ell )$ such that if $N\geq \exp(C\D^{-\ell (k-1)}\log\D^{-1})$ and $A\subseteq[1,N]$ with $|A|\geq\D N$, then there exists $d\ne0$ such that 
\[\{P_1(d),\dots ,P_\ell (d)\}\subseteq A-A.\]
\end{thm}

We remark that by \emph{symmetrizing} $A$ one can immediately deduce, from Theorem \ref{1}, the following result on the existence of the same polynomial configurations in a shift of the sumset of $A$.

\begin{cor}
There exists $C'=C'(P_1,\dots,P_\ell )$ such that if $N\geq \exp(C'\D^{-2\ell(k-1)}(\log\D^{-1})$ and $A\subseteq[1,N]$ with $|A|\geq\D N$, then there exists $d\ne0$ and $m\in[1,N]$ such that 
\[2m+\{P_1(d),\dots,P_\ell (d)\}\subseteq A+A.\]
\end{cor}
}

We remark that one can immediately deduce, from Theorem \ref{1}, the following result on the existence of polynomial configurations in (a shift of) the sumset $A+B$ of two given sets $A,B\subseteq[1,N]$.

\begin{cor}\label{cor}
If $A,B\subseteq[1,N]$ and 
$m+\{P_1(d),\dots ,P_\ell (d)\}\nsubseteq A+B$
for any $d\ne0$ and $m\in[2,2N]$,
then we necessarily have
\[\frac{|A||B|}{N^2}\leq C'\left(\frac{\log\log N}{\log N}\right)^{1/\ell(k-1)}\]
for some absolute constant $C'=C'(P_1,\dots,P_\ell )$.
\end{cor}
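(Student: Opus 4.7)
The deduction proceeds via a symmetrization trick reducing the sumset question to a difference-set question amenable to Theorem~\ref{1}. Set
\[
C := A \cup (N+1 - B) \;\subseteq\; [1, N],
\]
so that $|C| \geq \max(|A|, |B|) \geq \sqrt{|A||B|}$. The difference set decomposes as
\[
C - C \;=\; (A - A) \cup (B - B) \cup \bigl(A + B - (N+1)\bigr) \cup \bigl((N+1) - (A + B)\bigr),
\]
and the crucial observation is that the ``cross'' cluster $A + B - (N+1)$ is exactly what we need: a common-$d$ configuration $\{P_1(d),\ldots,P_\ell(d)\} \subseteq A + B - (N+1)$ translates immediately to $\{m + P_i(d)\} \subseteq A + B$ with the common shift $m = N + 1 \in [2, 2N]$.

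The plan is to apply Theorem~\ref{1} to $C \subseteq [1, N]$ in contrapositive form. The corollary's bound on $|A||B|/N^2$ corresponds to a matching density bound on $|C|/N$ with an exponent that is \emph{halved} to $1/(2\ell(k-1))$ after passing through $\sqrt{|A||B|}/N$; this doubling of exponent provides the slack needed below. Under the negation of the corollary, $\sqrt{|A||B|}/N$ exceeds Theorem~\ref{1}'s threshold, so Theorem~\ref{1} produces $d \neq 0$ with $\{P_i(d)\} \subseteq C - C$.

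\textbf{Main obstacle.} Theorem~\ref{1} only guarantees that $\{P_i(d)\}$ lies somewhere in $C - C$, not specifically in the desired cross cluster $A + B - (N+1)$. The configuration could instead be distributed across the diagonal clusters $A - A$, $B - B$, or lie in the reverse cross cluster $(N+1) - (A+B)$ (particularly relevant when all $P_i$ take nonnegative values, as for $P_i(d) = d^2$). Resolving this cluster issue is the main technical substance of the deduction: one performs a case analysis, controlling configurations in the diagonal clusters by applying Theorem~\ref{1} directly to $A$ and $B$ (and exploiting the trivial bound $|A||B|/N^2 \leq \min(|A|,|B|)/N$), and handling the reverse cross cluster by running the argument simultaneously for the families $\{P_i\}$ and $\{-P_i\}$. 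The exponent slack noted above provides precisely the margin needed to absorb the losses from this case analysis and recover the stated bound.
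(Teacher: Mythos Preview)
Your symmetrization via $C = A \cup (N+1-B)$ does not close, and the ``case analysis'' you sketch cannot repair it. The difficulty is not merely that the whole tuple $\{P_i(d)\}$ might fall into the wrong cluster; it is that Theorem~\ref{1} gives you a single $d$ with each $P_i(d)\in C-C$, and nothing prevents different $P_i(d)$ from landing in different clusters. Even in the case $\ell=1$ your fix fails: if Theorem~\ref{1} hands you $P_1(d)\in A-A$ (or $B-B$), this says nothing whatsoever about $A+B$, so ``applying Theorem~\ref{1} directly to $A$ and $B$'' produces more configurations in the diagonal clusters, not in the cross cluster you need. The density slack you identify is real, but it is slack in the \emph{size} estimate, not in the \emph{location} of the configuration, and cannot be traded for the latter.

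The paper sidesteps the cluster problem entirely by a cleaner construction. By the identity $\sum_{m=2}^{2N}|B\cap(m-A)|=|A||B|$, pigeonhole gives some $m\in[2,2N]$ for which $D:=B\cap(m-A)$ has $|D|\ge |A||B|/(2N-1)$. The point is that $D-D\subseteq A+B-m$ \emph{as a containment of sets}: if $x,y\in D$ then $x\in B$ and $y=m-a$ for some $a\in A$, so $x-y=(a+x)-m\in A+B-m$. Now Theorem~\ref{1} applied to $D\subseteq[1,N]$ gives, under the hypothesis of the corollary, $|D|/N\le C(\log\log N/\log N)^{1/\ell(k-1)}$, and hence $|A||B|/N^2\le 2C(\log\log N/\log N)^{1/\ell(k-1)}$ with the correct exponent and no loss. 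The moral: rather than taking a union (which enlarges the difference set beyond what you can control), take an intersection of $B$ with a reflected copy of $A$, so that the difference set is \emph{contained} in a single translate of $A+B$.
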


\begin{proof} Since
\[\sum_{2\leq m\leq2N}\left|B\cap(m-A)\right|=|A||B|\]
it follows that there exists $m\in[2,2N]$ such that if we set $D=B\cap(m-A)$, then
\[|D|\geq \frac{|A||B|}{2N-1}.\]
The result then follows from Theorem \ref{1} since
\[D-D\subseteq  A+B-m
.\qedhere\]
\end{proof}

%\section{Lifting to $\Z^k$}

The strategy we will employ to prove Theorem \ref{1} is to \emph{lift} the problem to $\Z^k$ in such a way that we may then apply the following higher dimensional analogue of S\'ark\"ozy's theorem.

\begin{thm}\label{2} 
If $B\subseteq[1,N]^k$ and $(d,d^2,\dots,d^k)\notin B-B$ for any $d\ne0$ then we necessarily have
\[\frac{|B|}{N^k}\leq C\left(\frac{\log\log N}{\log N}\right)^{1/(k-1)}\]
for some absolute constant $C=C(k)$.
\end{thm}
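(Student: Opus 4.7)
The plan is to adapt S\'ark\"ozy's circle-method proof to the ambient space $\Z^k$ via a density-increment iteration. Let $\D = |B|/N^k$, write $f = 1_B - \D\cdot 1_{[1,N]^k}$ for the balanced function, and fix a smooth weight $\mu_M$ supported on the integers in $[1,M]$ with $M \sim N^{1/k}$ (so that each coordinate $d^i$ of the forbidden configuration lies in $[-N,N]$). Consider the counting quantity
\[\Lambda(B) = \sum_{x \in \Z^k}\sum_{d\in\Z} 1_B(x)\,1_B\!\bigl(x+(d,d^2,\ldots,d^k)\bigr)\,\mu_M(d).\]
By the hypothesis that no non-zero $d$ yields $(d,\ldots,d^k)\in B-B$, only $d=0$ contributes, so $\Lambda(B)\ll|B|\cdot\mu_M(0)\ll\D N^k$; on the other hand, replacing each $1_B$ by its average produces a ``main term'' of size $\D^2N^kM$, which dominates as soon as $M\gg 1/\D$. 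The discrepancy must therefore be supplied by the fluctuations of $f$.

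By Fourier inversion, $\Lambda(B)=\int_{\T^k}|\widehat{1_B}(\A)|^2\,S_M(\A)\,d\A$, where
\[S_M(\A)=\sum_{d\in\Z}\mu_M(d)\,e\!\bigl(\A_1 d+\A_2 d^2+\cdots+\A_k d^k\bigr).\]
Subtracting the main term and using the bound on $\Lambda(B)$ forces
\[\int_{\T^k}|\widehat{f}(\A)|^2\,|S_M(\A)|\,d\A\;\gtrsim\;\D^2 N^k M.\]
Weyl's inequality controls $|S_M(\A)|\ll M^{1-\eta(k)}$ outside a union of major arcs of the form $|\A_i-a_i/q|\leq M^{-i}\cdot Q^{O(1)}$ with common denominator $q\leq Q$, where $Q$ is a small polynomial in $1/\D$. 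The $L^2$ mass of $\widehat f$ must therefore concentrate on these major arcs.

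A pigeonhole argument over $q$ and over the phase classes $a_i\bmod q$, combined with approximating the relevant characters $e(\A\cdot x)$ by constants on a sufficiently fine sub-box, produces a ``rational'' arithmetic box
\[\mathcal{B}=\prod_{i=1}^{k}\bigl(x_0^{(i)}+q\cdot[1,N_i]\bigr),\qquad q\leq Q,\ N_i\gtrsim N/Q^{O(1)},\]
on which $|B\cap\mathcal{B}|/|\mathcal{B}|\geq\D(1+c\D)$. Dilating each coordinate by $q$ takes $\mathcal{B}$ to a product box $\prod[1,N_i]$, while the forbidden configuration becomes $(qd,qd^2,\ldots,qd^k)$; since $\degree P_i=i$ matches the $i$-th coordinate of the curve, after substituting $d'=qd$ the dilated set $B'$ in $\prod[1,N_i]$ still avoids the configuration $(d',(d')^2,\ldots,(d')^k)$ along $d'\in q\Z\setminus\{0\}$. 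We may therefore iterate.

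The main obstacle is the quantitative bookkeeping of this iteration. Each outer step improves $\D$ to $\D(1+c\D)$, so $O(1/\D)$ stages suffice to force the absurd density $\D\geq 1$, but each stage shrinks the side lengths by a factor polynomial in $1/\D$, governed by the Weyl exponent $\eta(k)$ and by the constraint $M\leq N^{1/k}$. Balancing the accumulated shrinkage against the total number of stages --- it is here that the dependence on $k$ enters, through both the degree of the polynomials and the size of $M$ --- produces the claimed exponent $1/(k-1)$ in the final bound $\D\ll(\log\log N/\log N)^{1/(k-1)}$.
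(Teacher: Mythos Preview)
Your outline has the right overall shape --- Fourier counting, Weyl-sum major/minor arc split, density increment --- but there is a genuine error in the iteration step that breaks the argument, and a second gap in the bookkeeping that obscures where the exponent $1/(k-1)$ really comes from.

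\textbf{The rescaling does not preserve the curve.} You pass to a box $\mathcal{B}=\prod_i (x_0^{(i)}+q\cdot[1,N_i])$ with the \emph{same} spacing $q$ in every coordinate and then dilate. Under that dilation, a putative difference $(d,d^2,\ldots,d^k)$ in $B'$ corresponds to $(qd,qd^2,\ldots,qd^k)$ in $B$. Your substitution $d'=qd$ does \emph{not} turn this into $(d',(d')^2,\ldots,(d')^k)$: one has $(d')^i=q^id^i\neq qd^i$ for $i\geq2$. So the hypothesis that $B$ avoids the monomial curve gives you nothing about $B'$, and the iteration cannot continue. The fix is to use \emph{anisotropic} grids $\Lambda=\{m+(\ell_1 q,\ell_2 q^2,\ldots,\ell_k q^k)\}$, i.e.\ spacing $q^i$ in the $i$th coordinate; then the rescaled difference is $(qd,(qd)^2,\ldots,(qd)^k)$, which \emph{is} on the curve. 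This is why the paper works throughout in the anisotropic box $Q_M=[1,M]\times\cdots\times[1,M^k]$ and uses grids of exactly this shape.

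\textbf{The size of the increment and the source of $1/(k-1)$.} You assert an increment $\D\mapsto\D(1+c\D)$ and say $O(1/\D)$ stages suffice; but that would give $\log N\ll\D^{-1}\log(1/\D)$ and hence exponent $1$, not $1/(k-1)$. In fact the pigeonholing over $q\leq\eta^{-k}$ (with $\eta\sim\D$), combined with the major-arc bound $|S_M(\A)|\ll Mq^{-1/k}$, only produces $L^2$ Fourier mass of size $c\D^{k-1}\cdot\D|B|$ on a single $\mathbf{M}_q$. Converting this, via an $L^2$ (Heath-Brown/Szemer\'edi style) argument rather than an $L^\infty$ one, into a density increment on an anisotropic grid yields $\D\mapsto\D(1+c\D^{k-1})$. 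That forces $O(\D^{-(k-1)})$ stages, and \emph{this} is where the exponent $1/(k-1)$ actually enters --- not from the relation $M\sim N^{1/k}$ or the Weyl exponent as you suggest.
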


\comment{
\begin{thm}\label{2} 
%Let $0<\D\leq 1$ and $k\in\N$. 
There exists $C=C(k)$ such that if $N\geq \exp(C\D^{-(k-1)}(\log\D^{-1})^2)$ and $B\subseteq[1,N]^k$ with $|B|\geq\D N^k$, then there exists $d\ne0$ such that 
\[(d,d^2,\dots,d^k)\in B-B.\]
\end{thm}
}

Since Theorem \ref{2} is concerned with the intersection of a difference set with the \emph{monomial} curve $(d,d^2,\dots,d^k)$ we speculate that the methodology of Balog et al. \cite{BPPS} may be applied in this higher dimensional situation to obtain far superior bounds in Theorem \ref{2} and hence also in Theorem \ref{1}. 
%We plan to investigate this in a future paper.

\emph{Further notational convention:} Throughout this paper the letters $c$ and $C$ will denote absolute constants that will generally satisfy $0<c\ll 1\ll C$, whose values may change from line to line and even from step to step, and will unless otherwise specified depend only on the dimension $k$.

\section{Reduction to  the key dichotomy proposition}

We first present the \emph{lifting} argument that allows us to deduce Theorem \ref{1} from Theorem \ref{2}.

\subsection{Proof that Theorem \ref{2} implies Theorem \ref{1}}\label{2.1}

Let $P_i(d)=c_{i1} d+\cdots+c_{ik}d^k$ for $1\leq i\leq\ell$. 

Suppose that the coefficient matrix $\mathcal{P}=\{c_{ij}\}$ has rank $r$ with $1\leq r\leq \ell$. Without loss in generality we will make the additional assumption that it is in fact the first $r$ polynomials $P_1,\dots,P_{r}$ that are linearly independent and use $\mathcal{R}$ to denote the $r\times k$ matrix corresponding to the first $r$ rows of $\mathcal{P}$. 

As a consequence of this assumption it  follows that the remaining polynomials, $P_{r+i}$ with $1\leq i\leq \ell-r$, can be expressed as
\[P_{r+i}=d_{i1}P_1+\cdots+d_{ir} P_{r}\]
where $\mathcal{D}=\{d_{ij}\}$ is some $(\ell-r)\times r$ matrix with rational coefficients.

Note that 
\begin{align*}
&\mathcal{P}:\Z^k\rightarrow\Z^\ell \\
&\mathcal{R}:\Z^k\rightarrow\Z^r \\
&\mathcal{D}:\mathcal{R}(\Z^k)\rightarrow\Z^{\ell-r}
\end{align*} 
and
\[\mathcal{P}(b)=\left(\begin{matrix} \mathcal{R}(b) \\ \mathcal{D}(\mathcal{R}(b))\end{matrix}\right).\]

%Let $\mathcal{P}:\Z^k\rightarrow\Z^\ell $ denote the mapping given by
%\[\mathcal{P}_i(b)=\sum_{j=1}^k c_{ij}b_j,\]
%where $P_i(d)=c_{i1} d+\cdots+c_{ik}d^k$ and 
Let $A^\ell=A\times\cdots\times A\subseteq[1,N]^\ell$ and $\D=|A|/N$.

The full rank assumption on the matrix $\mathcal{R}$ ensures that there exists an absolute constant $c$, depending only on the coefficients of the matrix $\mathcal{R}$, such that
\[\bigl|\mathcal{R}(\Z^k)\cap(A^r-s)\bigr|\geq c \D^r N^r\]
for some $s\in[1,c^{-1}]^r$.
Thus, if we choose $N'$ to be a large enough multiple of $N$ (again depending only the coefficients of the matrix $\mathcal{R}$) and let
\[B'=\left\{b\in[-N',N']^k\,:\,\mathcal{R}(b)\in A^r -s\right\},\]
it follows that 
\[|B'|\geq c\, \D^r  N^k.\]

Since \[\sum_{t\in\Z^{\ell-r}}\sum_{b\in B'}1_{A^{\ell-r}}(\mathcal{D}(\mathcal{R}(b))+t)=|A|^{\ell-r}|B'|\]
it follows that there exists $c=c(\mathcal{P})$ and $t\in\Z^{\ell-r}$ such that 
\[\left|\left\{b\in B'\,:\, \mathcal{D}(\mathcal{R}(b))\in A^{\ell-r}-t\right\}\right|\geq c\delta^{\ell-r}|B'|.\]

Hence, if we let 
\[B=\left\{b\in[-N',N']^k\,:\,\mathcal{P}(b)\in A^\ell -m\right\},\]
where $m=(s,t)\in\Z^\ell$,
it follows that 
\[|B|\geq c\, \D^\ell  N^k.\]

The result now follows from Theorem \ref{2} since if there were to exist a $d\ne0$ such that
\[(d,d^2,\dots,d^k)\in B-B\]
this would immediately implies that
\[(P_1(d),\dots,P_\ell (d))\in A^\ell-A^\ell,\]
since $\mathcal{P}(B)\subseteq A^\ell -m$.\qed
%\end{proof}

Matters therefore reduce to proving Theorem \ref{2}. 

\subsection{Dichotomy between randomness and arithmetic structure}

%In order to prove Theorem \ref{2} we will use an \emph{energy increment strategy}. 

Our approach will be to establish a dichotomy between randomness and structure of the following form. 

%Our approach, as with all density increment arguments, including the more standard $L^\infty$ increment strategy of Roth, will be to establish a dichotomy between randomness and structure of the following form. 

Let us fix the notation $Q_M=[1,M]\times\cdots\times[1,M^k]$ and $\VE=(10k)^{-1}$.
%and $\sigma=\sigma(\D,k)=c_k\D^{k-1}/\log\D^{-1}$.

\begin{propn}%[Dichotomy between randomness and structure]
\label{KeyPropn}
Let $B\subseteq Q_M$, $\D=|B|/|Q_M|$, and $\sigma=c_k\D^{k-1}$. If $M\geq \D^{-C}$, with $C>0$ sufficiently large (depending only on $k$), then either
%\begin{itemize}
%\item[(i)] 
$B$ behaves as though it were a random set in the sense that
\begin{equation}\label{random}
\sum_{d=1}^M \left|B\cap\left(B+(d,d^2,\dots,d^k)\right)\right|\geq \frac{\VE}{4}\,\D|B|M
\end{equation}
%\end{itemize}
or
%\begin{itemize}
%\item[(ii)] 
$B$ has arithmetic structure in the sense that there exists a grid $\Lambda\subseteq Q_M$ of the form
\begin{equation}\label{grid}
\Lambda=\{m+(\ell_1 q,\dots,\ell_k q^k)\,|\,(\ell_1,\dots,\ell_k)\in Q_L\}
\end{equation}
with $L\geq \D^{k+2}\sigma M$ such that
 \[|B\cap \Lambda|>\D(1+\sigma)|\Lambda|.\]
 
%\end{itemize}
\end{propn}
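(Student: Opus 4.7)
The plan is a Fourier-analytic density-increment argument of Roth/Sárközy type, adapted to the moment curve in $\mathbb{Z}^k$. Write $f = 1_B - \delta\, 1_{Q_M}$ for the balanced function of $B$, and let $\nu_M$ denote the normalized counting measure on the curve $\{(d, d^2, \ldots, d^k) : 1 \leq d \leq M\}$, so that the sum on the left of (\ref{random}) equals $M\, \langle 1_B, 1_B \ast \nu_M\rangle$. Expanding $1_B = \delta\, 1_{Q_M} + f$ produces a main term
$$\delta^2\, M\, \langle 1_{Q_M}, 1_{Q_M} \ast \nu_M\rangle \geq \tfrac{\varepsilon}{2}\, \delta |B|\, M,$$
the inequality coming from a direct count restricted to $d \leq \varepsilon M$, for which each coordinate of $Q_M + (d, d^2, \ldots, d^k)$ loses only an $\varepsilon^j$ fraction of its intersection with $Q_M$ (the choice $\varepsilon = (10k)^{-1}$ absorbs all such boundary losses). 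The remaining error terms, which are linear or quadratic in $f$, are handled via Plancherel on $\mathbb{T}^k$: their absolute value is controlled by
$$\|f\|_2 \cdot \|1_B\|_2\cdot \sup_{\xi \neq 0}\, \bigl|\widehat{\nu_M}(\xi)\bigr|, \qquad \widehat{\nu_M}(\xi) = \frac{1}{M}\sum_{d=1}^M e(\xi_1 d + \cdots + \xi_k d^k).$$

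If $\sup_{\xi \neq 0}|\widehat{\nu_M}(\xi)| \leq c_k \delta^2$ for a sufficiently small $c_k$, the error is swallowed by half the main term and the random alternative (\ref{random}) holds. Otherwise, by Plancherel, there exists a frequency $\xi = (\xi_1, \ldots, \xi_k) \in \mathbb{T}^k$ with both $|\widehat{\nu_M}(\xi)| > c_k \delta^2$ and a correspondingly large Fourier coefficient $|\widehat{f}(\xi)|$. The first inequality, via a standard Weyl-type estimate for polynomial exponential sums (applied iteratively to the coefficients $\xi_k, \xi_{k-1}, \ldots, \xi_1$), forces simultaneous rational approximations $\xi_j = a_j/q^j + \beta_j$ with a common denominator $q \leq \delta^{-C}$ and errors $|\beta_j| \leq q^{-j} M^{-j} \delta^{-C}$. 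The hypothesis $M \geq \delta^{-C}$ ensures $q \ll M$ so that these approximations are informative.

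The final step converts this Diophantine structure into a density increment on a grid of the prescribed form. Partition $Q_M$ into disjoint grids $\Lambda_m$ of the shape (\ref{grid}), choosing the parameter $L$ just below $M/q$ so that simultaneously $qL \leq M$ and $L \geq \delta^{k+2}\sigma M$. On each such grid the product character $e(\xi \cdot x) = \prod_j e(\xi_j x_j)$ is essentially constant, because for $x = m + (\ell_1 q, \ldots, \ell_k q^k) \in \Lambda_m$ we have $\xi_j x_j = \xi_j m_j + a_j \ell_j + O(|\beta_j|\, L^j q^j)$, and the $O(\cdot)$ term is negligible by our choice of $L$. A standard pigeonholing—transferring the large Fourier mass $|\widehat{f}(\xi)|$ to physical space and combining it with the mean-zero condition $\sum_m \langle f, 1_{\Lambda_m}\rangle = 0$—then produces a grid $\Lambda$ on which $|B \cap \Lambda| > \delta(1 + \sigma)|\Lambda|$, with $\sigma = c_k \delta^{k-1}$ determined by the threshold in the dichotomy. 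The main technical obstacle I anticipate is the simultaneous rational approximation across all $k$ coordinates with a common denominator $q$: one must track how much $q$ inflates at each Weyl differencing step while keeping the cumulative phase errors $|\beta_j|(qL)^j$ small in every coordinate, and it is precisely this bookkeeping that pins down the polynomial size constraint $L \geq \delta^{k+2}\sigma M$.
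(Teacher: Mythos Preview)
Your outline follows the classical $L^\infty$ density-increment template (find one large Fourier coefficient of $f$, then increment on a grid), but the paper explicitly departs from this and uses an $L^2$ approach---and the distinction is not cosmetic here. First, a concrete error: the bound ``$\|f\|_2\,\|1_B\|_2\,\sup_{\xi\neq 0}|\widehat{\nu_M}(\xi)|$'' for the error terms cannot work, because $\xi$ ranges over the torus $\T^k$, not a discrete group. The condition $\xi\neq 0$ is vacuous: for $\xi$ near $0$ (or near any rational point with small denominator) one has $|\widehat{\nu_M}(\xi)|$ close to $1$, so the supremum you wrote is essentially $1$. There is no single ``main frequency'' to isolate; the correct dichotomy is major arcs versus minor arcs, and the cross terms must be handled separately (the paper does this in physical space via the interior set $B'$).

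More seriously, even a corrected $L^\infty$ argument does not deliver the increment $\sigma=c_k\D^{k-1}$ claimed in the Proposition. What the Weyl estimates actually yield (after the minor-arc contribution is discarded) is that some major box $\mathbf{M}_q$ carries $\int_{\mathbf{M}_q}|\widehat{f_B}|^2\geq c\D^{k-1}\cdot\D|B|$. Passing from this to a \emph{single} $\xi$ with $|\widehat{f_B}(\xi)|$ large costs a factor of $|\mathbf{M}_q|$, and since $|\mathbf{M}_q|\asymp q^k\eta^{-k^2}/|Q_M|$ with $q\leq\eta^{-k}$ and $\eta\asymp\D$, the resulting pointwise bound only gives an increment of order $\D^{O(k^2)}$, not $\D^{k-1}$. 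The paper avoids this loss by never extracting a single frequency: Lemma~\ref{L1} records the $L^2$ mass lower bound above, while Lemma~\ref{L2} shows directly that if $B$ is regular (no grid carries increment $\sigma$) then $\int_{\mathbf{M}_q}|\widehat{f_B}|^2\leq 12\sigma\cdot\D|B|$ for \emph{every} $q$, by bounding $\|f_B*1_\Lambda\|_2^2$ and using that $|\widehat{1_\Lambda}|\geq|\Lambda|/2$ on $\mathbf{M}_q$. The two bounds contradict each other precisely at $\sigma=c_k\D^{k-1}$. (A smaller slip: Weyl gives $\xi_j\approx a_j/q$ with a common denominator $q$, not $a_j/q^j$; the powers $q^j$ appear only as the step sizes of the grid $\Lambda$.)
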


In contrast with the standard $L^\infty$ increment strategy of Roth, we will obtain the dichotomy in Proposition \ref{KeyPropn} by exploiting the concentration of the $L^2$ mass of the Fourier transform. Similar arguments of this type can be found in Heath-Brown \cite{HB} and Szemer\'edi \cite{SzRoth}, see also Ruzsa and Sanders \cite{RandS}.
The proof of Proposition \ref{KeyPropn} will be presented in Sections \ref{stage} and \ref{proof}.

%\begin{proof}[Proof that Proposition \ref{KeyPropn} implies Theorem \ref{2}]

\subsection{Proof that Proposition \ref{KeyPropn} implies Theorem \ref{2}}

It is easy to see, by partitioning $[1,N]^k$ into boxes of size $M\times M^2\times\cdots\times M^k$ with $M$ essentially equal to $N^{1/k}$, that we may, with no loss in generality, assume that $B\subseteq Q_M$ with $\D=|B|/|Q_M|\geq|B|/N^k$.

If $(d,d^2,\dots,d^k)\notin B-B$ for any $d\ne0$ (as is the assumption in Theorem \ref{2}), then Proposition \ref{KeyPropn} allows us to perform an iteration. At the $n$th step of this iteration we will have a set $B_n\subseteq Q_{M_n}$ of size $\D_n|Q_{M_n}|$, this set will be an appropriately rescaled version  of a subset of $B$ itself and hence will also contain no non-trivial differences of the form $(d,d^2,\dots,d^k)$. 

Let $B_0=B$, $M_0=M$ and $\D_0=\D$. Proposition \ref{KeyPropn} ensures that either 
\be\label{3}
M_n\leq \D_n^{-C}
\ee
or else the iteration proceeds allowing us to choose $M_{n+1}$, $\D_{n+1}$ and $B_{n+1}$ such that
\[M_{n+1}\geq c\D_n^{(2k+1)}M_n\]
and
\[\D_{n+1}\geq \D_n+c\D_n^{k}.\]

Now as long as the iteration continues we must have $\D_n\leq1$, and so after $O(\D^{1-k})$ iterations condition (\ref{3}) must be satisfied, giving
\[(\D^{-(2k+1)})^{-C\D^{1-k}}M\leq \D^{-C}.\]
From this it follows that
\[\log M\leq C\D^{-(k-1)}\log \D^{-1}\]
and consequently (after a short calculation that we leave to the reader) that
\[\D\leq C\left(\frac{\log\log M}{\log M}\right)^{1/(k-1)}.\]
This establishes Theorem \ref{2}.\qed
%\end{proof}

The rest of this article is devoted to the proof of Proposition \ref{KeyPropn}.

\section{Setting the stage for the proof of Proposition \ref{KeyPropn}}\label{stage}

We suppose that $B\subseteq Q_M$, $\D=|B|/ |Q_M|$, and $M\geq \D^{-C}$.
Our approach will be to assume that $B$ exhibits neither of the two properties described in Proposition \ref{KeyPropn} and then seek a contradiction.

%\subsection{Simple consequences of $B$ being neither random nor structured}

\subsection{A simple consequence of $B$ being non-random}
If we were to suppose that $B$ is non-random, in the sense that inequality (\ref{random}) does not hold, then it would immediately follows that 
\be\label{random2}
\sum_{m,n\in\Z^k}1_B(m)1_B(n)1_S(m-n) %=\int_{\T^k}|\widehat{1_B}(\A)|^2\widehat{1_S}(\A)\,d\A
\leq \frac{1}{4}\,\D|B||S|\ee
where
\[S=\{(d,d^2,\dots,d^k)\,:\,1\leq d \leq \VE M\}.\]

\subsection{A simple consequences of $B$ being non-structured}
If we were to assume that $B$ is \emph{regular}, in the sense that $B$ in fact satisfies the inequality
\[|B\cap \Lambda|\leq\D(1+\sigma)|\Lambda|\]
for all arithmetic grids $\Lambda\subseteq Q_M$ of the form (\ref{grid}) with $L\geq\D^{k+2}\sigma M$, then the set
\[B'=B\cap\bigl((\VE M,(1-\VE) M]\times\cdots\times(\VE M^k,(1-\VE)M^k]\bigr)\]
must contain most of the elements of $B$. In particular we must have
\be\label{B'}
|B'|\geq(3/4)|B|\ee
since if this were not the case we would immediately obtain a grid $\Lambda\subseteq Q_M$ of the form (\ref{grid}) with $q=1$ and $L\geq\VE M$ such that
\[|B\cap \Lambda|\geq\D(1+1/4)|\Lambda|.\]

\subsection{The balance function}
We define the \emph{balance function of $B$} to be \[f_B=1_B-\D 1_{Q_M},\]
and note that $f_B$ has mean value zero, that is
$\sum f_B(m)=0.$
This property of the balance function $f_B$ will be critically important in our later arguments. 

It easy to verify that if $B$ satisfies inequalities (\ref{random2}) and (\ref{B'}), then
\be\label{fB1}
\sum_{m,n\in\Z^k}f_B(m)f_B(n)1_S(m-n) \leq -\frac{1}{4}\,\D|B||S|.
\ee
One can see this by simply expanding the sum into a sum of four sums, one involving only the function $1_B$ on which we can apply (\ref{random2}), two involving the functions $1_B$ and $-\D1_{Q_M}$ on which we can apply (\ref{B'}), and one involving only the function 
$-\D1_{Q_M}$ which can be estimated trivially.

\subsection{Fourier analysis on $\Z^k$}
For $f:\Z^k\rightarrow\C$ with finite support we define the \emph{Fourier transform of $f$} to be
\[\widehat{f}(\A)= \sum\limits_{m\in\Z^k}f(m)e^{-2\pi i m\cdot\A}.\]
The finite support assumption on $f$ ensures that $\widehat{f}$ is a continuous function on $\T^k$ and that orthogonality immediately gives both the Fourier inversion formula and Plancherel's identity, namely
\[f(m)=\int_{\T^k}\widehat{f}(\A)e^{2\pi i m\cdot\A}d\A
\quad\quad\text{and}\quad\quad
\int_{\T^k}|\widehat{f}(\A)|^2 d\A=\sum_{m\in\Z^k}|f(m)|^2.\]

It is then easy to verify that from inequality (\ref{fB1}) we immediately obtain the estimate
\be\label{fB2}
\int_{\T^k}|\widehat{f_B}(\A)|^2|\widehat{1_S}(\A)|\,d\A\geq\frac{1}{4}\,\D|B||S|
\ee
where we recognize 
\be\label{weyl sum}
\widehat{1_S}(\A)=\sum_{d=1}^{\VE M} e^{-2\pi i (\A_1 d+\A_2 d^2+\cdots+\A_k d^k)},
\ee
as a classical Weyl sum. 

\subsection{Estimates for Weyl sums}

Since $\VE=(10k)^{-1}$ is fixed it is clear that whenever $|\A_j|\ll M^{-j}$ there can be no cancellation in the Weyl sum (\ref{weyl sum}), in fact the same is also true when each $\A_j$ is close to a rational with \emph{small} denominator (in other words there is no cancellation over sums in residue classes modulo $q$). 

We now state a precise formulation of the well-known fact that this is indeed the only obstruction to cancellation.
Let $\eta>0$. We define
\be
\mathbf{M}_q=\mathbf{M}_q(\eta)=\left\{\A\in\T^k\,:\,\Bigl|\A_j-\frac{a_j}{q}\Bigr|\leq\frac{1}{\eta^{k} M^{j}}\ (1\leq j\leq k) \ \text{for some $a\in[1,q]^k$% with $(a,q)=1$
}\right\}.
\ee

\begin{lem}\label{Weyl Estimates}%[Standard Weyl Sum Estimates] 
Let $\eta>0$ and $M\geq \eta^{-C}$ (with $C$ sufficiently large depending on $k$). 
\begin{itemize}
\item[(i)] \emph{(Minor box estimate)} 
If $\A\notin\mathbf{M}_q$ for any $1\leq q\leq\eta^{-k}$, then
\[|\widehat{1_S}(\A)|\leq C\eta |S|%+O(M^{1-\nu_k})\] for some $\nu_k>0$
.\]
\item[(ii)] \emph{(Major box estimate)}
If $\A\in\mathbf{M}_q$ for some $1\leq q\leq\eta^{-k}$, then
\[|\widehat{1_S}(\A)|\leq C q^{-1/k} |S| %+ O(M^{1/2}) 
.\]
\end{itemize}
\end{lem}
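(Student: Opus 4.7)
The plan is to treat the two cases separately using classical techniques from the Hardy-Littlewood circle method.

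For part (ii), the major-box estimate, I would write $\A_j=a_j/q+\B_j$ with $|\B_j|\leq\eta^{-k}M^{-j}$ and decompose each $d\in[1,\VE M]$ uniquely as $d=qm+r$ with $1\leq r\leq q$. A direct expansion shows that $(a_j/q)(qm+r)^j\equiv(a_j/q)r^j\pmod{1}$ for each $j$, so the Weyl sum factors as $\widehat{1_S}(\A)=\sum_{r=1}^{q}e(-\sum_j(a_j/q)r^j)\,S_\B(r)$, where $S_\B(r)=\sum_{m}e(-\sum_j\B_j(qm+r)^j)$ and the sum over $m$ runs over an interval of length $\asymp\VE M/q$. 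By a standard trapezoidal approximation, $S_\B(r)=q^{-1}\int_0^{\VE M}e(-\sum_j\B_j x^j)\,dx+O(1)$; the integral is of size $O(\VE M/q)$ by the total-variation bound $\sum_j j|\B_j|(\VE M)^{j}=O(\eta^{-k}\VE^k)=O(1)$. Since this approximation to $S_\B(r)$ is independent of $r$, pulling it out of the outer sum gives $|\widehat{1_S}(\A)|\leq O(\VE M/q)\cdot|G_q(a)|+O(q)$, with $G_q(a)=\sum_{r=1}^{q}e(-\sum_j(a_j/q)r^j)$. The classical Hua bound $|G_q(a)|\leq Cq^{1-1/k}$, together with the hypothesis $M\geq\eta^{-C}$ (which renders the $O(q)$ term negligible against $q^{-1/k}|S|$), then delivers $|\widehat{1_S}(\A)|\leq Cq^{-1/k}|S|$.

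For part (i), I would argue by contraposition: assuming $|\widehat{1_S}(\A)|\geq C\eta|S|$, I must produce some $q\leq\eta^{-k}$ witnessing $\A\in\mathbf{M}_q$. The tool is Weyl's inequality applied iteratively. Raise $\widehat{1_S}(\A)$ to the power $2^{k-1}$ and apply the Weyl differencing identity $k-1$ times; after rearrangement this expresses the result as an average over $h_1,\dots,h_{k-1}\in[-\VE M,\VE M]$ of linear exponential sums in $d$ whose dominant frequency is $k!h_1\cdots h_{k-1}\A_k$ (the remaining contributions depend only on $\A_1,\dots,\A_{k-1}$ and cancel inside the modulus). The largeness hypothesis forces many of these linear sums to be individually large, and a standard Dirichlet/counting argument then yields $q_k\leq C\eta^{-C}$ and $a_k$ with $|q_k\A_k-a_k|\leq C\eta^{-C}M^{-k}$; taking $C$ in the hypothesis $M\geq\eta^{-C}$ sufficiently large we may arrange $q_k\leq\eta^{-k}$. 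With $\A_k$ now pinned, I substitute back into $\widehat{1_S}(\A)$, split $d$ into residues $\bmod q_k$, and observe that the inner sum is essentially a Weyl sum of degree $k-1$ in explicit linear functions of $(\A_1,\dots,\A_{k-1})$. Reapplying the same argument pins down $\A_{k-1}$, and so on down to $\A_1$; a final lcm of the denominators produces a single $q$ witnessing $\A\in\mathbf{M}_q$.

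The main technical obstacle is the bookkeeping in (i): each descent step multiplies the constants by a polynomial in $\eta^{-1}$, and ensuring that the final denominator and the major-box widths fit inside the stated thresholds $q\leq\eta^{-k}$ and $\eta^{-k}M^{-j}$ requires calibrating the absolute constant $C$ in the standing hypothesis $M\geq\eta^{-C}$ accordingly. This calibration is classical in the circle-method treatment of Waring's problem, and we would follow the template of Vaughan's \emph{The Hardy-Littlewood Method} to keep the dependence of all implicit constants on $\eta$ and $k$ explicit through the $k$ stages of descent.
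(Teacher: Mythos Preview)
Your treatment of (ii) follows the same route as the paper: factor into a complete exponential sum $G_q(a)$ times an oscillatory integral and apply Hua's bound $|G_q(a)|\leq Cq^{1-1/k}$. One arithmetic slip: the total variation $\sum_j j|\B_j|(\VE M)^{j}$ is $O(\eta^{-k})$, not $O(\eta^{-k}\VE^k)=O(1)$, since already the $j=1$ term contributes $\eta^{-k}\VE$. This only inflates the error in your sum--integral comparison to $O(\eta^{-k})$ per residue class, hence $O(q\eta^{-k})=O(\eta^{-2k})$ in total, which is still absorbed by $M\geq\eta^{-C}$.

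Your approach to (i), however, cannot deliver the stated threshold $q\leq\eta^{-k}$. Weyl differencing applied to a sum of size $\geq\eta N$ yields, after Dirichlet, an approximation $|\A_k-a_k/q_k|\leq q_k^{-1}N^{-(k-1)}$ with $q_k\ll\eta^{-2^{k-1}}$, and this exponent $2^{k-1}$ is intrinsic to the Weyl inequality; it has nothing to do with the size of $M$. Your assertion that ``taking $C$ in the hypothesis $M\geq\eta^{-C}$ sufficiently large we may arrange $q_k\leq\eta^{-k}$'' is therefore incorrect: enlarging $M$ only suppresses the $N^\E$ and $1/N$ losses in Weyl's inequality, not the $1/q$ term that determines the denominator. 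Your iterated descent will land you in some $\mathbf{M}_q$ with $q\leq\eta^{-K(k)}$ for $K(k)\geq 2^{k-1}>k$ (once $k\geq3$), which is strictly weaker than the lemma asserts and would degrade the exponent $1/(k-1)$ in Theorems~\ref{1} and~\ref{2}.

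The paper closes this gap by a two-scale argument. First it proves a power-saving bound $O(N^{1-\nu})$ off very wide major arcs $\mathbf{M}'_{a/q}$ (widths $N^{-(j-\mu)}$, denominators $q\leq N^\mu$); this is exactly the iterated Weyl/Dirichlet argument you describe, and at that coarse scale the crude denominator bound is harmless. Then, \emph{on} $\mathbf{M}'_{a/q}$, it invokes the full major-arc asymptotic
\[
\Bigl|\sum_{d\leq N}e(P(\A,d))\Bigr|\leq CNq^{-1/k}\Bigl(1+\sum_{j=1}^k N^j|\A_j-a_j/q|\Bigr)^{-1/k}+O(N^{1/2}),
\]
where the second factor comes from van der Corput's lemma applied to the continuous integral $v_N(\B)$. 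It is precisely the exponents $1/k$ in Hua's bound and in this van der Corput decay that force $q\leq\eta^{-k}$ and $|\A_j-a_j/q|\leq\eta^{-k}N^{-j}$ whenever the sum exceeds $C\eta N$. In (ii) you used only the trivial bound $|v_N(\B)|\leq N$, which suffices there; but the refinement to the narrow boxes in (i) genuinely requires the van der Corput decay, and your proposal omits it.
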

The proof of this result is a straightforward (and presumably well-known) consequence of the standard estimates for Weyl sums, for the sake of completeness we include these arguments in Appendix A.

\section{The proof of Proposition \ref{KeyPropn}}\label{proof}

In the previous section we established that inequalities (\ref{random2}) and (\ref{B'}) would be immediate consequences of
$B$ not exhibiting either of the two properties described in Proposition \ref{KeyPropn}. We now present the two lemmas from which we will obtain our desired contradiction.

In both lemmas below we set $\eta=\D/8C$, where $C$ is the large constant in Lemma \ref{Weyl Estimates}.

\begin{lem}\label{L1}
Let $\eta=\D/8C$. 
If $B$ is neither random nor structured, in the sense outlined in Proposition \ref{KeyPropn},
%inequalities (\ref{random2}) and (\ref{B'}) hold, 
then there exists $1\leq q\leq\eta^{-k}$ such that
\be
\frac{1}{\D|B|}\int_{\mathbf{M}_q}|\widehat{f_B}(\A)|^2 d\A\geq c\D^{k-1}.
\ee
\end{lem}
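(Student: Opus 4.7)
The plan is to combine the lower bound (\ref{fB2}) with the major/minor box decomposition provided by Lemma \ref{Weyl Estimates}, using the threshold $\eta=\D/8C$ specified in the statement.

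First I would split the integral in (\ref{fB2}) according to whether $\A$ lies in the major region $\mathcal{M}=\bigcup_{1\leq q\leq\eta^{-k}}\mathbf{M}_q$ or in its complement. On the complement Lemma \ref{Weyl Estimates}(i) gives $|\widehat{1_S}(\A)|\leq C\eta|S|$ pointwise, so pulling this factor out and applying Plancherel together with the trivial bound $\|f_B\|_2^2=|B|(1-\D)\leq|B|$ yields
\[
\int_{\T^k\setminus\mathcal{M}}|\widehat{f_B}(\A)|^2|\widehat{1_S}(\A)|\,d\A\;\leq\;C\eta|S||B|\;\leq\;\tfrac{1}{8}\,\D|B||S|,
\]
where the last step uses the choice $\eta=\D/8C$. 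Combined with (\ref{fB2}) this forces
\[
\int_{\mathcal{M}}|\widehat{f_B}(\A)|^2|\widehat{1_S}(\A)|\,d\A\;\geq\;\tfrac{1}{8}\,\D|B||S|.
\]

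Next I would apply the union bound over $q$ and invoke the major-box estimate Lemma \ref{Weyl Estimates}(ii) to extract the factor $q^{-1/k}|S|$, obtaining
\[
\sum_{q=1}^{\eta^{-k}}q^{-1/k}\int_{\mathbf{M}_q}|\widehat{f_B}(\A)|^2\,d\A\;\geq\;c\,\D|B|.
\]
Since $\sum_{q=1}^{\eta^{-k}}q^{-1/k}\leq C\eta^{-(k-1)}\leq C'\D^{-(k-1)}$, a pigeonhole argument on this weighted sum then produces a single $q\in[1,\eta^{-k}]$ with $\int_{\mathbf{M}_q}|\widehat{f_B}(\A)|^2\,d\A\geq c\D^k|B|$, which is exactly the desired conclusion after dividing by $\D|B|$.

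The argument is essentially mechanical; the one place where care is required is the balancing of constants so that the minor-box part absorbs at most half of (\ref{fB2}), which is precisely what dictates taking $\eta$ proportional to $\D$ with a small enough constant. The weighted pigeonhole over $\sum q^{-1/k}\sim\eta^{-(k-1)}$ is routine once this absorption step is in hand, so I do not anticipate any substantive obstacle.
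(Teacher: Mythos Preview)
Your argument is correct and follows essentially the same route as the paper: split (\ref{fB2}) into major and minor boxes, absorb the minor part via Lemma \ref{Weyl Estimates}(i) and Plancherel using the choice $\eta=\D/8C$, then apply the major-box bound and pigeonhole over $\sum_{q\leq\eta^{-k}}q^{-1/k}\ll\eta^{-(k-1)}$ to isolate a single $q$. The paper's presentation is slightly terser (it records the conclusion $\max_q\int_{\mathbf{M}_q}|\widehat{f_B}|^2\geq\eta^k|B|$ without writing out the bound on $\sum q^{-1/k}$), but the logic is identical.
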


The second lemma is a precise quantitative formulation, in our setting, of the standard $L^2$ density increment lemma.

\begin{lem}\label{L2}
Let $\eta=\D/8C$ and $\sigma\leq \eta^{k-2}/8\pi$. If $B$ is \emph{regular}, in the sense that \[|B\cap \Lambda|\leq\D(1+\sigma)|\Lambda|\]
for all arithmetic grids $\Lambda\subseteq Q_M$ of the form (\ref{grid}) with $qL\geq\eta^2\sigma M$, then
\be
\frac{1}{\D|B|}\int_{\mathbf{M}_q}|\widehat{f_B}(\A)|^2 d\A\leq 12\sigma.
\ee
\end{lem}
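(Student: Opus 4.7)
The plan is to build a convolution kernel $K_q$ on $\Z^k$ associated to grids of the form (\ref{grid}) whose Fourier transform is bounded below on $\mathbf{M}_q$, and then apply Plancherel to turn the Fourier $L^2$ bound into an $L^2$ bound on $f_B*K_q$, which the regularity hypothesis will control via its interpretation as a grid average. Concretely, I would pick a positive integer $L$ with $qL$ just above $\eta^2\sigma M$ and set
\[K_q=|Q_L|^{-1}\,1_{G_q},\qquad G_q=\{(\ell_1 q,\ell_2 q^2,\dots,\ell_k q^k):\ell\in Q_L\},\]
so that $K_q$ is the uniform probability measure on the generating set of a grid of the form (\ref{grid}). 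Its Fourier transform factors as $\widehat{K_q}(\A)=\prod_{j=1}^{k}L^{-j}\sum_{\ell=1}^{L^j}e^{-2\pi i q^j\A_j\ell}$; for $\A\in\mathbf{M}_q$, writing $\A_j=a_j/q+\B_j$ with $|\B_j|\leq(\eta^k M^j)^{-1}$, the congruence $q^j\A_j\equiv q^j\B_j\pmod 1$ reduces each factor to a geometric sum depending only on $\B_j$. The hypothesis $\sigma\leq\eta^{k-2}/(8\pi)$ bounds $|q^j\B_j L^j|\leq(qL/M)^j/\eta^k\leq(8\pi)^{-j}$, so by a Taylor estimate each factor stays close to $1$ in absolute value and hence $|\widehat{K_q}|^2\geq 1/2$ uniformly on $\mathbf{M}_q$. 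Plancherel then gives
\[\int_{\mathbf{M}_q}|\widehat{f_B}(\A)|^2\,d\A\ \leq\ 2\int_{\T^k}|\widehat{f_B}|^2|\widehat{K_q}|^2\,d\A\ =\ 2\sum_{n\in\Z^k}|f_B*K_q(n)|^2.\]

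Setting $F(n)=f_B*K_q(n)$, a direct computation yields $F(n)=|Q_L|^{-1}(|B\cap\Lambda(n)|-\D|Q_M\cap\Lambda(n)|)$, where $\Lambda(n)=n-G_q$ is a shifted copy of the base grid. For $n$ in the interior $I=\{n\in Q_M:\Lambda(n)\subseteq Q_M\}$ the regularity hypothesis applies directly and yields the one-sided bound $F(n)\leq\D\sigma$; I also note the trivial pointwise lower bound $F(n)\geq-\D$ and the crude global bound $|F(n)|\leq 2$. The boundary $\partial=Q_M\setminus I$ satisfies $|\partial|/|Q_M|\leq k\cdot qL/M=k\eta^2\sigma$, which by the choice $\eta=\D/8C$ is much smaller than $\D^2\sigma$ and therefore absorbable.

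The main obstacle is then to estimate $\sum_n F(n)^2$ from above, since the regularity hypothesis is \emph{one-sided}. I would split $F=F^+-F^-$ on $I$ and handle each part separately. On the positive side, $F^+\leq\D\sigma$ on $I$ gives immediately $\sum_I(F^+)^2\leq(\D\sigma)^2|I|\leq\D^2\sigma|Q_M|$. On the negative side, the mean-zero identity $\sum_{n\in\Z^k}F(n)=0$ (inherited from $\sum f_B=0$) combined with $|\sum_{n\in\partial}F|\leq 2|\partial|$ yields $\sum_I F^-=\sum_I F^++\sum_{n\in\partial}F\leq\D\sigma|I|+2|\partial|$, and then the pointwise bound $F^-\leq\D$ on $I$ gives $\sum_I(F^-)^2\leq\D\sum_I F^-\leq\D^2\sigma|I|+2\D|\partial|$. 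The boundary itself contributes at most $4|\partial|$ to $\sum_n F^2$. Summing and using $|\partial|\ll\D^2\sigma|Q_M|$ to absorb boundary terms produces $\sum_n F(n)^2\leq 6\D^2\sigma|Q_M|$, and doubling via the Plancherel factor of $2$ gives the desired bound $\int_{\mathbf{M}_q}|\widehat{f_B}|^2\,d\A\leq 12\sigma\D|B|$.
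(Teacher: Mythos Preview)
Your argument is correct and follows essentially the same route as the paper's own proof: your Fourier lower bound on $\widehat{K_q}$ over $\mathbf{M}_q$ is exactly the paper's Claim~1, and your interior/boundary split together with the mean-zero trick to control $\sum F^2$ is the paper's Claim~2 (the paper compresses your $F^{\pm}$ decomposition into the identity $|g|=2g_+-g$, but the content is identical). One small correction: your boundary set should be $(\operatorname{supp} F)\setminus I=(Q_M+G_q)\setminus I$ rather than $Q_M\setminus I$, since $f_B*K_q$ is supported on $Q_M+G_q$; this is needed for the identity $\sum_I F+\sum_{\partial}F=0$ you invoke, and the corrected $\partial$ still satisfies the same size bound $|\partial|\ll\eta^2\sigma|Q_M|$.
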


We therefore obtain a contradiction if $\sigma\leq c\D^{k-1}$, proving Proposition \ref{KeyPropn}.

\subsection{Proof of Lemma \ref{L1}}
%\begin{proof}[Proof of Lemma \ref{L1}]
It follows from the minor box estimate of Lemma \ref{Weyl Estimates} and Plancherel's identity that
\[\int\limits_{\text{minor boxes}}|\widehat{f_B}(\A)|^2|\widehat{1_S}(\A)|\,d\A\leq C\eta|S||B|.\]
Therefore, if $\eta=\D/8C$, it follows from estimate (\ref{fB2}) and the major box estimate of Lemma \ref{Weyl Estimates} that
\[\sum_{q=1}^{\eta^{-k}}q^{-1/k}\int_{\mathbf{M}_q}|\widehat{f_B}(\A)|^2 d\A\geq \eta|B|.\]
It therefore follows that
\[\max_{1\leq q\leq\eta^{-k}}\int_{\mathbf{M}_q}|\widehat{f_B}(\A)|^2 d\A\geq \eta^k|B|\]
as required.
\qed
%\end{proof}

\subsection{Proof of Lemma \ref{L2}}
%\begin{proof}[Proof of Lemma \ref{L2}]
We fix $q$ and $L$ so that $qL=\eta^2\sigma M$ and define
\[\Lambda=\{-(\ell_1 q,\ell_2 q^2,\dots,\ell_k q^k)\,|\, 1\leq\ell_j \leq L^j\}.\]

\begin{claim}\label{C1} If $\A\in \mathbf{M}_q$, then $|\widehat{1_{\Lambda}}(\A)|\geq |\Lambda|/2$. 
\end{claim}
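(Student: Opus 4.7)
The approach is to exploit the Cartesian product structure of $\Lambda$. Since
\[\Lambda=\prod_{j=1}^{k}\{-q^j,-2q^j,\dots,-L^j q^j\},\]
the indicator $1_\Lambda$ is a tensor product and its Fourier transform factors as
\[\widehat{1_\Lambda}(\A)=\prod_{j=1}^{k}\sum_{\ell=1}^{L^j}e^{2\pi i\ell q^j\A_j}.\]
The problem therefore reduces to bounding each of the $k$ one-dimensional geometric sums from below.

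For $\A\in\mathbf{M}_q$, I would write $\A_j=a_j/q+\B_j$ with $|\B_j|\leq(\eta^k M^j)^{-1}$. Since $\ell q^{j-1}a_j\in\Z$, the rational contribution cancels in every exponential, and the $j$-th factor becomes the geometric sum $\sum_{\ell=1}^{L^j}e^{2\pi i\ell q^j\B_j}$ with small phase $2\pi q^j \B_j$. The next step is to bound this phase using the relation $qL=\eta^2\sigma M$: for every $1\leq\ell\leq L^j$,
\[|q^j\ell\B_j|\leq L^j q^j/(\eta^k M^j)=(qL)^j/(\eta^k M^j)=(\eta^2\sigma)^j/\eta^k\leq \eta^{2-k}\sigma\leq \tfrac{1}{8\pi},\]
where the penultimate inequality uses that $\eta^2\sigma\leq 1$ (so the maximum over $j\geq 1$ occurs at $j=1$), and the final inequality is the standing hypothesis $\sigma\leq\eta^{k-2}/(8\pi)$.

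The proof would then conclude by applying the standard geometric sum estimate
\[\Bigl|\sum_{\ell=1}^{L}e^{i\ell\theta}\Bigr|\geq L\bigl(1-(L\theta/2)^2/6\bigr)\qquad\text{whenever }|L\theta|\leq 1,\]
which follows immediately from $|\sin y|\geq|y|(1-y^2/6)$ combined with $|\sin x|\leq|x|$, applied to each of the $k$ factors. Since each factor is thereby seen to be at least $L^j$ times a quantity very close to $1$, the product over $j=1,\dots,k$ exceeds $|\Lambda|/2$ as required.

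The argument is in this sense routine; the only substantive point is matching the numerical constants, so that the phase bound of $1/(8\pi)$ arising from the hypothesis on $\sigma$ is tight enough that the per-coordinate losses, taken $k$-fold in the product, do not erode the bound below $|\Lambda|/2$.
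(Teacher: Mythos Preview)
Your proof is correct. The paper's argument and yours share the same key observation---that for $\A\in\mathbf{M}_q$ the effective phases $L^jq^j\|\A_j\|$ are bounded by $(\eta^2\sigma)^j/\eta^k$---but differ in how this is converted into a lower bound on $|\widehat{1_\Lambda}(\A)|$. The paper does not factor: it applies the crude triangle inequality directly to the full sum,
\[|\widehat{1_\Lambda}(\A)|\geq|\Lambda|-\sum_{\ell}\bigl|e^{2\pi i(\ell_1q\A_1+\cdots+\ell_kq^k\A_k)}-1\bigr|\geq|\Lambda|\Bigl(1-2\pi\sum_{j=1}^kL^j\|q^j\A_j\|\Bigr),\]
so that the hypothesis $\sigma\leq\eta^{k-2}/(8\pi)$ yields exactly $|\Lambda|/2$ with no slack. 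Your route via the product of $k$ geometric sums is equally valid and in fact incurs only a quadratic (rather than linear) loss in the phase per coordinate; the one step you leave imprecise is the final check that the $k$-fold product $\prod_j(1-\epsilon_j)$ stays above $1/2$. This follows from $\prod_j(1-\epsilon_j)\geq1-\sum_j\epsilon_j$ together with $\sum_j\epsilon_j\leq(\pi^2/3)\,\eta^{4-2k}\sigma^2\leq1/192$, so your approach goes through with room to spare. The paper's version is marginally cleaner in that the constant in the hypothesis is consumed exactly; yours is more robust in that the quadratic loss leaves the constants far from tight.
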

\begin{proof}[Proof of Claim \ref{C1}]
Since
\[\sum_{j=1}^k L^j\|q^j\A_j\|
\leq \sum_{j=1}^k(Lq)^j \eta^{-k}M^{-j}
=\eta^{-k}\sum_{j=1}^k(\eta^2\sigma)^j
\leq 2\eta^{-(k-2)}\sigma,\]
for all $\A\in \mathbf{M}_q$, where $\|\cdot\|$ denotes the distance to the nearest integer, it follows that
\begin{align*}
|\widehat{1_{\Lambda}}(\A)|
\geq |\Lambda|-\sum_{\ell_j=1}^{L^j}\bigl|e^{2\pi i (\ell_1 q\A_1+\cdots+\ell_k q^k\A_k)}-1\bigr|
\geq |\Lambda|\Bigl(1-2\pi\sum_{j=1}^k L^j\|q^j\A_j\|\Bigr)
%&\geq |\Lambda|(1-2\pi \VE C_0\D^{-k})\\
\geq |\Lambda|/2,
\end{align*}
for all $\A\in \mathbf{M}_q$, provided $\sigma\leq\eta^{k-2}/8\pi$.
\end{proof}
Plancherel's identity (applied to the function $f_B*1_\Lambda$) and Claim \ref{C1} imply that 
\be
\frac{1}{\D|B|}\int_{\mathbf{M}_q}|\widehat{f_B}(\A)|^2 d\A\leq\frac{4}{\D|B||\Lambda|^2}\sum_{m\in\Z^k}|f_B*1_\Lambda(m)|^2.
\ee
The conclusion of Lemma \ref{L2} will therefore be an immediate consequence of the following.

\begin{claim}\label{C2} As a consequence of the assumptions in Lemma \ref{L2} if follows that
\[\sum_{m\in\Z^k}|f_B*1_\Lambda(m)|^2\leq 3\sigma\,\D|B||\Lambda|^2.\]
\end{claim}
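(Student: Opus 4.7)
The plan is to exploit the regularity hypothesis to get a one-sided pointwise bound on $F:=f_B*1_\Lambda$ in the interior of $Q_M$, and then convert it to an $L^2$ bound using the mean-zero property of $f_B$. I would begin by writing $F(m)=|B\cap(m-\Lambda)|-\D|Q_M\cap(m-\Lambda)|$ and setting $G:=\{m:m-\Lambda\subseteq Q_M\}$. For each $m\in G$ the translated grid $m-\Lambda$ has the form (\ref{grid}) with parameters $q,L$ satisfying $qL=\eta^2\sigma M$, so the regularity assumption of Lemma \ref{L2} directly gives $F(m)\leq\D\sigma|\Lambda|$. Combined with the trivial lower bound $F(m)\geq-\D|\Lambda|$ and $\sigma\leq1$, this yields $|F(m)|\leq\D|\Lambda|$ on $G$, while on $G^c$ only the trivial bound $|F(m)|\leq|\Lambda|$ will be available. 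A direct estimate comparing $\prod_j(M^j+L^jq^j)$ to $\prod_j(M^j-L^jq^j)$ using $(Lq/M)^j\leq\eta^2\sigma$ then shows $|G^c\cap\supp F|\leq C\eta^2\sigma|Q_M|$.

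Next I would invoke the mean-zero identity $\sum_m F(m)=0$ to get $\sum_{m\in G}F(m)=-\sum_{m\in G^c}F(m)$, which together with $\sum_{m\in G}F^+(m)\leq\D\sigma|\Lambda|\cdot|G|\leq\sigma|\Lambda||B|$ will let me bound
\[\sum_{m\in G}|F(m)|=2\sum_{m\in G}F^+(m)+\sum_{m\in G^c}F(m)\leq 2\sigma|\Lambda||B|+|\Lambda|\,|G^c|.\]
Splitting $\sum_m F^2$ into interior and boundary contributions and estimating each via the corresponding $L^\infty\cdot L^1$ product (with $L^\infty$-norm $\D|\Lambda|$ on $G$ and $|\Lambda|$ on $G^c$), I then obtain
\[\sum_m F(m)^2\leq \D|\Lambda|\bigl(2\sigma|\Lambda||B|+|\Lambda|\,|G^c|\bigr)+|\Lambda|^2|G^c|\leq 2\D\sigma|\Lambda|^2|B|+\frac{2C\eta^2}{\D}\sigma|\Lambda|^2|B|.\]
Since $\eta=\D/(8C)$, the error term is at most $\D\sigma|\Lambda|^2|B|$, and the total is at most $3\D\sigma|\Lambda|^2|B|$ as required.

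The main obstacle will be controlling the boundary contribution, where the regularity bound gives no pointwise control on $F$; this is precisely what dictates the tight choice $\eta=\D/(8C)$, ensuring that the volume $\eta^2\sigma|Q_M|$ of the bad region is small enough to be absorbed into the main term.
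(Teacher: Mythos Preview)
Your proof is correct and follows essentially the same approach as the paper: split into the interior region $G=\mathcal{M}$ and the boundary $\mathcal{E}$, use the regularity hypothesis for the one-sided bound $F\le\D\sigma|\Lambda|$ on the interior, invoke the mean-zero of $f_B$ via $|g|=2g_+-g$, and bound $\sum F^2$ by $L^\infty\cdot L^1$. Your write-up is in fact slightly more careful than the paper's in that you explicitly track the cross-term $-\sum_G F=\sum_{G^c}F$ coming from the mean-zero identity and absorb it into the boundary contribution, whereas the paper's displayed inequality $\sum_{\mathcal{M}}|F|^2\le 2(\sup|F|)\sum_{\mathcal{M}}F_+$ tacitly drops that term.
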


\begin{proof}[Proof of Claim \ref{C2}]
We let
\[\mathcal{M}=\{m\in\Z^k\,|\,m-\Lambda\subseteq Q_M\}\]
\[\mathcal{E}=(Q_M+\Lambda)\setminus\mathcal{M}\]
and write
\[\sum_{m\in\Z^k}|f_B*1_\Lambda(m)|^2=\sum_{m\in\mathcal{M}}|f_B*1_\Lambda(m)|^2+\sum_{m\in\mathcal{E}}|f_B*1_\Lambda(m)|^2.\]

We note that since
\[f_B*1_\Lambda(m)=|B\cap(m-\Lambda)|-\D|Q_M\cap(m-\Lambda)|\]
it follows from our \emph{regularity} assumption on $B$ that if $m\in\mathcal{M}$, then
\[-\D|\Lambda|\leq f_B*1_\Lambda(m)\leq \D\sigma|\Lambda|,\]
while for $m\in\mathcal{E}$ we can only conclude that
\[|f_B*1_\Lambda(m)|\leq |\Lambda|.\]

Now since $f_B$ has mean value zero the convolution 
\[f_B*1_{\Lambda}(m)=\sum_n f_B(n)1_{\Lambda}(m-n)\]
also has mean value zero. Thus, using the fact that $|g|=2g_+-g$, where $g_+=\max\{g,0\}$ denotes the \emph{positive-part} function, and the trivial size estimate $|\mathcal{M}|\leq |Q_M|$, we can deduce that
\begin{align*}
\sum_{m\in\mathcal{M}}|f_B*1_\Lambda(m)|^2
&\leq 2\left(\sup_{m\in\mathcal{M}}|f_B*1_{\Lambda}(m)|\right)\sum_{m\in\mathcal{M}}(f_B*1_\Lambda)_+(m)\\
&\leq
2(\D|\Lambda|)(\D\sigma|\Lambda|)|\mathcal{M}|\\
&\leq 2\D^2\sigma|\Lambda|^2|Q_M|.
\end{align*}

We leave it to the reader to verify that
\[|\mathcal{E}|\leq \left((1+2\eta^2\sigma)^k-(1-2\eta^2\sigma)^k\right)|Q_M|\leq 8k\eta^2\sigma |Q_M|,\]
and hence
\[\sum_{m\in\mathcal{E}}|f_B*1_\Lambda(m)|^2\leq |\Lambda|^2|\mathcal{E}|\ll \frac{1}{2}\sum_{m\in\mathcal{M}}|f_B*1_\Lambda(m)|^2,\]
provided $8k\eta^2\ll \D^2$. 

This concludes the proof of Claim \ref{C2} and establishes Lemma \ref{L2}. 
\end{proof}

\appendix

\section{Weyl sum estimates} 

\subsection{Standard major and minor arc estimates}

Let $P(\A,d)=\A_1d+\cdots+\A_kd^k$. 

\begin{lem}[Weyl inequality]\label{Weyl}
If $|\A_k-a_k/q|\leq q^{-2}$ and $(a,q)=1$, then
\[\left|\sum_{d=1}^N e^{2\pi i P(\A,d)}\right|\leq C_{k,\E} N^{1+\E}\left(\frac{1}{q}+\frac{1}{N}+\frac{q}{N^k}\right)^{1/2^{k-1}}.\]
\end{lem}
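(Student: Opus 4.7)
The plan is to apply the standard Weyl differencing technique. Writing $e(x)=e^{2\pi i x}$ for brevity and $S=\sum_{d=1}^N e(P(\A,d))$, the first step is to expand
\[|S|^2=\sum_{|h|<N}\sum_{d\in I_h} e(\Delta_h P(\A,d)),\]
where $\Delta_h P(\A,d)=P(\A,d+h)-P(\A,d)$ is a polynomial in $d$ of degree $k-1$ with leading coefficient $k\A_k h$, and $I_h\subseteq[1,N]$ is an interval of length at most $N$. The point is that one unit of degree has been traded for one new summation variable $h$.

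Iterating the differencing $k-1$ times and applying Cauchy--Schwarz at each stage to pull the outer summations out of the modulus, one eventually reduces the innermost exponential to a linear sum $\sum_d e(\beta d)$ with phase $\beta=k!\,\A_k\, h_1\cdots h_{k-1}$ modulo lower-order terms in the auxiliary variables $h_1,\dots,h_{k-1}$. Summing the resulting geometric series gives
\[|S|^{2^{k-1}}\ll N^{2^{k-1}-k}\sum_{|h_1|,\dots,|h_{k-1}|<N}\min\!\left(N,\,\|k!\,\A_k\, h_1\cdots h_{k-1}\|^{-1}\right).\]

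Next, I would group the $(k-1)$-tuples $(h_1,\dots,h_{k-1})$ according to the value of their product $h=h_1\cdots h_{k-1}$; the divisor-type bound on the number of such factorizations costs only a factor $\ll_\E N^\E$ and reduces matters to estimating the single-variable sum $\sum_{|h|\leq N^{k-1}}\min(N,\|k!\,\A_k h\|^{-1})$. This is where the Diophantine hypothesis $|\A_k-a_k/q|\leq q^{-2}$ with $(a_k,q)=1$ enters: by partitioning the range of $h$ into blocks of length $q$ and observing that $k!\,a_k h$ then runs through each residue class modulo $q$ with controlled multiplicity, one obtains the classical estimate
\[\sum_{|h|\leq H}\min\!\left(N,\,\|k!\,\A_k h\|^{-1}\right)\ll_{k,\E} N^\E\left(H+\frac{HN}{q}+q\right).\]
Applying this with $H=N^{k-1}$, combining with the previous display, and taking the $2^{k-1}$-th root yields exactly the claimed bound.

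The main obstacle is the bookkeeping: carefully tracking the lower-order polynomial contributions produced at each differencing step, controlling the boundary effects in the truncated intervals $I_h$, and verifying that the Cauchy--Schwarz losses combine to yield precisely the final exponent $1/2^{k-1}$. All of these steps are standard (see, for example, Vaughan's \emph{The Hardy--Littlewood Method}); the arithmetic input---the Diophantine hypothesis on $\A_k$---enters only at the very last step in a clean way, while the remainder of the argument is purely combinatorial and needs no information about the lower-degree coefficients $\A_1,\dots,\A_{k-1}$.
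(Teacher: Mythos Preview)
Your sketch is the standard Weyl differencing argument and is correct in outline; the paper itself does not prove this lemma at all but simply declares it ``completely standard'' and refers the reader to Montgomery's \emph{Ten Lectures}. What you have written is precisely the argument one finds there (or in Vaughan), so there is nothing to compare: you have supplied the proof the paper omits.
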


This result is completely standard, see for example \cite{TenLectures}.
We now fix a sufficiently small $\mu=\mu(k)>0$ and define
\be
\mathbf{M}_{a/q}'=\left\{\A\in\T^k\,:\,\Bigl|\A_j-\frac{a_j}{q}\Bigr|\leq \frac{1}{N^{j-\mu}} \ (1\leq j\leq k)\right\}.
\ee
Successive  applications of Dirichlet's principle and the Weyl inequality, starting with the highest power $k$, gives the following qualitative estimate (a quantitative version of which can be found in Vinogradov \cite{V}).

\begin{propn}[Minor arc estimate I]\label{minorI}
If $\A\notin \mathbf{M}_{a/q}'$ for any $(a,q)=1$ with $1\leq q \leq N^\mu$, then
\be
\left|\sum_{d=1}^N e^{2\pi i P(\A,d)}\right|\leq C N^{1-\nu}.
\ee
for some $\nu=\nu(k,\mu)>0$.
\end{propn}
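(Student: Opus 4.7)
My plan is to iterate Dirichlet's principle and the Weyl inequality downward through the coefficients $\A_k,\A_{k-1},\dots,\A_1$. At each step, Weyl's inequality either yields the desired power saving $|S|\leq CN^{1-\nu}$ directly, or pins the coefficient under consideration to within a very small neighborhood of a rational with small denominator. If we survive all $k$ steps, the accumulated approximations combine to place $\A$ in some $\mathbf{M}'_{a/q}$ with $1\leq q\leq N^\mu$, contradicting the hypothesis.

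First I would apply Dirichlet's theorem to $\A_k$ with $Q=N^{k-\mu_0}$, where $\mu_0=\mu_0(k,\mu)>0$ is chosen sufficiently small. This produces coprime $a_k,q_k$ with $1\leq q_k\leq N^{k-\mu_0}$ and $|\A_k-a_k/q_k|\leq (q_kN^{k-\mu_0})^{-1}\leq q_k^{-2}$. Lemma \ref{Weyl} then gives
\[\left|\sum_{d=1}^N e^{2\pi i P(\A,d)}\right|\leq C_{k,\E}N^{1+\E}\left(q_k^{-1}+N^{-1}+q_kN^{-k}\right)^{1/2^{k-1}}.\]
If $q_k>N^{\mu_0}$, every term in the bracket is at most $N^{-\mu_0}$, and taking $\E$ small produces a power saving, completing the proof.

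Otherwise $q_k\leq N^{\mu_0}$, and I would split the sum via $d=q_km+r$ into $q_k$ inner sums in $m$ of length $\asymp N/q_k$. By pigeonhole, at least one residue class contributes at least $|S|/q_k$ in absolute value, and its inner sum is a Weyl sum for a polynomial of degree $k$ in $m$ whose leading coefficient (modulo $1$) is $q_k^k\beta_k$ with $\beta_k=\A_k-a_k/q_k$, and whose $(k-1)$st coefficient is an explicit linear combination of $\A_{k-1}$ and $\A_k$ weighted by $r$ and $q_k$. I would then apply the same Dirichlet/Weyl procedure to the $(k-1)$st coefficient, obtaining either a power saving or a new small denominator $q_{k-1}\leq N^{\mu_0}$. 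Iterating through all $k$ coefficients yields denominators $q_1,\dots,q_k$ each bounded by $N^{\mu_0}$, and setting $q=\mathrm{lcm}(q_1,\dots,q_k)$ gives $q\leq N^{k\mu_0}$ together with integers $a_1,\dots,a_k$ satisfying $|\A_j-a_j/q|\leq N^{-j+C\mu_0}$ for all $j$. Fixing $\mu_0$ at the outset so that $C\mu_0\leq\mu$ places $\A\in\mathbf{M}'_{a/q}$ with $q\leq N^\mu$, contradicting the hypothesis.

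The main obstacle is tracking how the polynomial coefficients transform under the substitution $d\mapsto q_km+r$ and its analogues at later steps, and ensuring that pigeonholing over residue classes preserves the largeness of the sum through all $k$ iterations without the accumulated denominator growing beyond $N^\mu$. This forces a careful a priori choice of $\mu_0$ small in terms of $\mu$ and $k$, and the resulting exponent $\nu=\nu(k,\mu)$ ends up comparable to $\mu_0/2^k$.
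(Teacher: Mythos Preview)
Your outline is essentially the approach the paper indicates: the paper gives no proof beyond the single sentence ``Successive applications of Dirichlet's principle and the Weyl inequality, starting with the highest power $k$'' together with a reference to Vinogradov, and your plan is precisely this descent. One technical point you should make explicit when writing it out: after the substitution $d=q_km+r$ the inner polynomial in $m$ still has degree $k$, so Lemma~\ref{Weyl} applied to its leading coefficient $\beta_kq_k^k$ (which is tiny) is vacuous; the standard remedy is a partial summation to strip off the slowly varying factor $e^{2\pi i\beta_kq_k^km^k}$ at the cost of a harmless $N^{O(\mu_0)}$ before invoking the degree $k-1$ Weyl inequality on the remaining sum.
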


\begin{propn}[Major arc estimate]\label{major}
%Let $0<\D\leq 1$.
%If $\A\in \mathbf{M}_{a/q}$ with $1\leq q\leq C_0\D^{-k}$, then %for $\D_j=\A_j-a_j/q$ we have 
If $\A\in \mathbf{M}_{a/q}'$ for some $(a,q)=1$ with $1\leq q\leq N^\mu$, then
\be 
\left|\sum_{d=1}^N e^{2\pi i P(\A,d)}\right|\leq C Nq^{-1/k}\Bigl(1+\sum_{j=1}^k N^j|\A_j-a_j/q|\Bigr)^{-1/k}+O(N^{1/2}).
\ee
\end{propn}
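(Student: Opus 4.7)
The plan is to carry out the standard major-arc analysis for a Weyl sum. First I would write $\A_j = a_j/q + \beta_j$ with $|\beta_j|\leq N^{-j+\mu}$ and split the summation variable as $d=qr+s$ with $1\leq s\leq q$. Since $a_j(qr+s)^j\equiv a_js^j \pmod{q}$, the factor $e^{2\pi i P(a/q,d)}$ depends only on $d\pmod{q}$, and the sum factors as
\[
\sum_{d=1}^N e^{2\pi i P(\A,d)}=\sum_{s=1}^q e^{2\pi i P(a/q,s)}\sum_{\substack{r\geq 0\\qr+s\leq N}}e^{2\pi i P(\beta,qr+s)}.
\]
Writing $G(a/q)=\sum_{s=1}^q e^{2\pi i P(a/q,s)}$ and $I(\beta)=\int_0^N e^{2\pi i P(\beta,u)}\,du$, the aim is to show that the above equals $q^{-1}G(a/q)\cdot I(\beta)+O(N^{1/2})$ and then to estimate each factor separately.

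For the approximation step I would apply Euler--Maclaurin (or partial summation) to replace the inner sum over $r$ by $q^{-1}I(\beta)$. Setting $T=1+\sum_{j=1}^k N^j|\beta_j|$, the derivative of the phase satisfies $|\tfrac{d}{dr}P(\beta,qr+s)|\leq CqT/N$ on $r\in[0,N/q]$, so the integrand has total variation $O(T)$, which bounds the Riemann-sum error per residue class $s$. Summing over $s\in[1,q]$ and using $qT\ll N^{2\mu}\ll N^{1/2}$ (for $\mu$ small) absorbs the entire discretization error into the $O(N^{1/2})$ term in the conclusion; a small additional boundary contribution from the endpoints of the $r$-range is handled similarly.

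For the two main factors, I would invoke the classical complete Gauss sum bound $|G(a/q)|\leq Cq^{1-1/k}$, valid for $(a,q)=1$; this is obtained by reducing via the Chinese Remainder Theorem to prime-power moduli and applying Weyl differencing (or Hua's lemma) to the complete sum. For the oscillatory integral, the substitution $u=Nv$ converts $I(\beta)$ into $N\int_0^1 e^{2\pi i Q(v)}\,dv$ with $Q(v)=\sum_j(N^j\beta_j)v^j$; since the $\ell^\infty$-norm of the coefficients of $Q$ is comparable to $T$, a Van der Corput estimate using the derivative $Q^{(j_0)}$ for the index $j_0$ maximizing $N^j|\beta_j|$ yields $|I(\beta)|\leq CNT^{-1/k}$. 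Combining these gives the main term
\[
\frac{1}{q}\,|G(a/q)|\cdot|I(\beta)|\;\leq\; CNq^{-1/k}T^{-1/k},
\]
and adding the $O(N^{1/2})$ error from the approximation step completes the proof.

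The main technical obstacle I anticipate is the complete Gauss sum bound $|G(a/q)|\ll q^{1-1/k}$ under the condition $(a,q)=1$ rather than $(a_k,q)=1$: when the leading coefficient $a_k$ shares a factor with $q$, one must extract the common divisor and use that $\gcd(a_1,\ldots,a_k,q)=1$ forces some lower-order coefficient to be coprime to a large divisor of $q$, after which Weyl differencing on the complete sum still produces the desired saving. The rescaled Van der Corput estimate for $I(\beta)$ also requires a case split according to which $N^j|\beta_j|$ dominates $T$, but once the dominant index is fixed, the estimate $|Q^{(j_0)}|\gtrsim T$ on $[0,1]$ is immediate and the standard lemma applies.
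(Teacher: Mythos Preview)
Your proposal is correct and follows essentially the same route as the paper: the paper also writes the sum as $q^{-1}S(a,q)\,v_N(\A-a/q)+O(N^{1/2})$ (your $G(a/q)$ and $I(\beta)$), then invokes Hua's bound $|S(a,q)|\leq Cq^{1-1/k}$ for $(a,q)=1$ and van der Corput's lemma for $|v_N(\beta)|\leq CN(1+\sum_j N^j|\beta_j|)^{-1/k}$. You supply more detail than the paper does on the approximation step and on the technical caveat about $(a,q)=1$ versus $(a_k,q)=1$, but the structure and ingredients are identical.
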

\begin{proof}
It is straightforward to write 
\[\sum_{d=1}^N e^{2\pi i P(\A,d)}=q^{-1} S(a,q)v_N(\A-a/q)+O(N^{1/2})\]
where
\[S(a,q):=\sum_{r=0}^{q-1}e^{2\pi i P(a,r)/q}\quad\text{and}\quad v_N(\B):=\int_0^N e^{2\pi i P(\B,x)}dx.\]
The result then follows from the observation that 
\be
|S(a,q)|\leq C q^{1-1/k}
\ee
whenever $(a,q)=1$, which is a result of Hua (see for example \cite{Vaughan}), and
\be
|v_N(\B)|\leq C N \Bigl(1+\sum_{j=1}^k N^j|\B_j|\Bigr)^{-1/k}
\ee
which follows from van der Corput's lemma for oscillatory integrals (see for example \cite{BigS}) and rescaling.
\end{proof}

\subsection{Refinement of the major arcs}

Let $0<\eta\leq 1$ and
\be
\mathbf{M}_{a/q}=\mathbf{M}_{a/q}(\eta)=\left\{\A\in \T^k\,:\,\Bigl|\A_j-\frac{a_j}{q}\Bigr|\leq \frac{1}{\eta^{k}N^j} \ (1\leq j\leq k)\right\}.
\ee

Combining Propositions \ref{minorI} and \ref{major} we easily obtain the following result from which Lemma \ref{Weyl Estimates} is an immediate consequence.

\begin{propn}[Minor arc estimate II]\label{minor} 
If $\A\notin \mathbf{M}_{a/q}$ for any $(a,q)=1$ with $1\leq q \leq \eta^{-k}$, then 
\[\left|\sum_{d=1}^N e^{2\pi i P(\A,d)}\right|\leq C\eta N + O(N^{1-\nu}).\] 
\end{propn}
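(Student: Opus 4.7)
The plan is to derive Proposition \ref{minor} as a direct consequence of the two results established just above it, by a simple case split based on how well $\A$ can be rationally approximated. Throughout, I will tacitly use that $\eta^{-k}\leq N^{\mu}$, which is implied by the standing hypothesis $N\geq \eta^{-C}$ for $C$ large enough (compared to $k/\mu$); this will in particular guarantee that for $q\leq\eta^{-k}$ the box $\mathbf{M}_{a/q}$ is contained in $\mathbf{M}'_{a/q}$.

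Suppose $\A\notin\mathbf{M}_{a/q}$ for any $(a,q)=1$ with $1\leq q\leq\eta^{-k}$. First, if in addition $\A\notin\mathbf{M}'_{a/q}$ for any $(a,q)=1$ with $1\leq q\leq N^{\mu}$, then Proposition \ref{minorI} is immediately applicable and gives $\bigl|\sum_{d=1}^N e^{2\pi i P(\A,d)}\bigr|\leq CN^{1-\nu}$, absorbed into the error term. Otherwise there exists some $(a,q)=1$ with $1\leq q\leq N^{\mu}$ and $\A\in\mathbf{M}'_{a/q}$, and Proposition \ref{major} yields
\[
\Bigl|\sum_{d=1}^N e^{2\pi i P(\A,d)}\Bigr|\leq CNq^{-1/k}\Bigl(1+\sum_{j=1}^k N^j|\A_j-a_j/q|\Bigr)^{-1/k}+O(N^{1/2}).
\]

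Now I split into two subcases according to the size of $q$. If $\eta^{-k}<q\leq N^{\mu}$, then $q^{-1/k}<\eta$ and the main term is bounded by $C\eta N$, with the $O(N^{1/2})$ contributing to the $O(N^{1-\nu})$ error (assuming $\nu<1/2$). If instead $1\leq q\leq\eta^{-k}$, then the hypothesis of Proposition \ref{minor} forces $\A\notin\mathbf{M}_{a/q}$, so there must exist at least one index $j$ with $|\A_j-a_j/q|>(\eta^k N^j)^{-1}$. Consequently
\[
\Bigl(1+\sum_{j=1}^k N^j|\A_j-a_j/q|\Bigr)^{-1/k}\leq \eta,
\]
and combined with $q^{-1/k}\leq 1$ we again obtain the bound $C\eta N+O(N^{1/2})$.

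There is no genuine obstacle — the argument is purely organizational, and its only slightly delicate point is verifying that the narrow boxes $\mathbf{M}_{a/q}$ with $q\leq \eta^{-k}$ really do sit inside the wider Vinogradov boxes $\mathbf{M}'_{a/q}$, which is where the largeness condition on $N$ relative to $\eta$ enters. Once these cases are combined we obtain
\[
\Bigl|\sum_{d=1}^N e^{2\pi i P(\A,d)}\Bigr|\leq C\eta N+O(N^{1-\nu}),
\]
as required, and Lemma \ref{Weyl Estimates} follows from this together with the trivial observation $|S|\asymp \VE M$ and the reduction of any $a\in[1,q]^k$ to coprime numerators by passing to the reduced denominator.
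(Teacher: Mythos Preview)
Your argument is correct and follows essentially the same route as the paper's own proof: a case split according to whether $\A$ lies in some wide box $\mathbf{M}'_{a/q}$ (invoking Proposition~\ref{minorI} if not), and then, on the major arcs, a further split according to whether $q>\eta^{-k}$ (so $q^{-1/k}<\eta$) or $q\leq\eta^{-k}$ (so some $N^j|\A_j-a_j/q|>\eta^{-k}$). The only addition you make beyond the paper is to spell out explicitly the role of the largeness assumption on $N$ and the absorption of $O(N^{1/2})$ into $O(N^{1-\nu})$, both of which are harmless.
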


\begin{proof}
It follows from Proposition \ref{major} that on $\mathbf{M}_{a/q}'$ we have
\[\left|\sum_{d=1}^N e^{2\pi i P(\A,d)}\right|\leq C\eta N\]
provided $(a,q)=1$ and either 
\[\eta^{-k}\leq q\leq N^\mu\]
or there exists $j$ such that
\[\eta^{-k}N^{-j}\leq |\A_j-a_j/q|\leq N^{-j+\mu}.\qedhere\]
\end{proof}

\end{document}